\definecolor{darkblue}{rgb}{0.0,0.0,0.5}
\definecolor{darkgrey}{rgb}{0.5,0.1,0.1}
\setlist[enumerate]{leftmargin=.5in}
\setlist[itemize]{leftmargin=.5in}
\newcommand*\circled[1]{\tikz[baseline=(char.base)]{
            \node[shape=circle,draw,inner sep=2pt] (char) {#1};}}
\newtheorem {theorem} {Theorem}
\newtheorem {proposition} [theorem] {Proposition}
\newtheorem {definition} {Definition}
\newtheorem {remark} {Remark}
\newtheorem {proof} {Proof}
\newcommand{\ds}{\displaystyle}
\newcommand{\eps}{\varepsilon}
\newcommand{\beq}{\begin{equation}}
\newcommand{\eeq}{\end{equation}}
\begin{document}

\title{\textbf{Spike-adding and reset-induced canard cycles}\\ \textbf{in adaptive integrate and fire models}}

\author[1,2]{Mathieu Desroches}
\author[3]{Piotr Kowalczyk}
\author[4,5]{Serafim Rodrigues}
\affil[1]{MathNeuro Team, Inria Sophia Antipolis M{\'e}diterran{\'e}e,
            France, \href{mailto:mathieu.desroches@inria.fr}{mathieu.desroches@inria.fr}}
\affil[2]{Universit{\'e} C{\^o}te d'Azur, Nice, France}
\affil[3]{Department of Mathematics,
            Wroc{\l}aw University of Science and Technology,
            Poland, \href{piotr.s.kowalczyk@pwr.edu.pl}{piotr.s.kowalczyk@pwr.edu.pl}}
\affil[4]{MCEN Team, BCAM - Basque Center for Applied Mathematics,
            Bilbao, Spain, \href{srodrigues@bcamath.org}{srodrigues@bcamath.org}}
\affil[5]{Ikerbasque - The Basque Foundation for Science}
\date{}

\maketitle
\begin{abstract}
We study a class of planar integrate and fire (IF) models called \textit{adaptive integrate and fire (AIF) models}, which possesses an adaptation variable on top of membrane potential, and whose subthreshold dynamics is piecewise linear (PWL). These AIF models therefore have two reset conditions, which enable bursting dynamics to emerge for suitable parameter values. Such models can be thought of as hybrid dynamical systems. We consider a particular slow dynamics within AIF models and prove the existence of bursting cycles with $N$ resets, for any integer $N$. Furthermore, we study the transition between $N$- and $(N+1)$-reset cycles upon vanishingly small parameter variations and prove (for $N=2$) that such transitions are organised by canard cycles. Finally, using numerical continuation we compute branches of bursting cycles, including canard-explosive branches, in these AIF models, by suitably recasting the periodic problem as a two-point boundary-value problem.
\end{abstract}
%

\section{Introduction}
\label{sec:intro}
Dynamical systems with reset are not only interesting for themselves, as mathematical objects, they are also extremely relevant in applications. There are at least two application areas where these systems are pertinent: first, in neuroscience, as models of neural activity; second, in electronic engineering where resets are one means of control. Neuron models with reset belong to the general class of \textit{integrate and fire (IF) models}, which correspond to hybrid systems following a differential equation together with one or several reset (algebraic) rules applied as soon as one variable attains a pre-determined value referred to as \textit{threshold}.

In the neuronal context, the variable undergoing a threshold event is the membrane potential $v$ and the reset consists in stopping the integration of the model and restarting instantaneously with a new, lower, value for the membrane potential. This mimics the behaviour of neurons' membrane potential as it crosses the excitability threshold with the key difference that real neurons (and other neuron models) emit an action potential (or spike) as soon as they cross their firing threshold, which corresponds to a large oscillation in $v$ which naturally brings the potential down, close to its rest value. IF models do not compute any spikes but instead mark threshold crossings as fixed values of $v$ and reset $v$ instantaneously to value close to rest. In this way, only the subthreshold dynamics is effectively governed by an evolution equation, and the superthreshold dynamics is replaced by an instantaneous reset. Therefore, the reset allows such one-dimensional models to support limit cycles and hence the reset acts as a second state variable. 

IF neuron models can be classified according to their subthreshold dynamics, whether it be linear (LIF models) or nonlinear (e.g. quadratic IF or QIF). They are very popular in neuroscience, both because they are amenable to theoretical analysis and also because they are very much suitable to computations within large networks. What is more, IF models are compatible with electronic implementation as so-called artificial neurons, and a large body of current research is focused on implementing networks of neuro-inspired systems akin to IF models within the fast-developing area of \textit{neuromorphic computing}~\cite{chen2019,furber2016}.

A number of IF models (e.g. LIF, QIF) are one-dimensional, the membrane potential $v$ being the only state variable. However, one can build and study IF type models with additional state variables accounting to, e.g., gating dynamics, in which case the reset can then affect more than one variable. As a consequence, such models can reproduce more complex neuronal oscillations when classical one-dimensional IF models can only reproduce spiking activity. This is in particular the case of the \textit{Izhikevich model}~\cite{izhikevich2003}, which is effectively a QIF with one slower recovery variable and hence two reset conditions. The Izhikevich model can reproduce bursting activity. Indeed, by considering the reset as one additional variable, the resulting model can be seen as having two fast ($v$ and ``reset'') and one slow ($w$) variables, which makes it compatible with minimal slow-fast models of bursting activity; see Section~\ref{sec:systems} below.

In the present work, we study a family of IF models called \textit{adaptive integrate and fire (AIF)} models, with piecewise-linear (PWL) subthreshold dynamics and one slow adaptation variable. Hence, this model can be considered as a PWL version of Izhikevich's model and it can produce bursting oscillations. This family of IF models has been introduced in~\cite{karbowski2000} and further studied in~\cite{coombes2012}. We study such bursting oscillations using both theoretical and computational tools. In particular, we prove the existence of limit cycles with $N$-resets, for any $N$, under certain assumptions on the slow dynamics of these AIF models. We also put a strong emphasis on the transition between $N$- and $(N+1)$-reset cycles in the model, under very small parameter variations, motivated by direct simulations that suggest the occurrence of \textit{canard cycles}~\cite{BCDD1981,eckhaus1983,krupa01} in such slow-fast systems with reset organising \textit{spike-adding transitions}~\cite{desroches2013,desroches2016} that correspond, in the IF context, to reset-adding transitions.

Canards are special solutions of multiple-timescale system that possess strong and unexpected properties. In particular, they follow repelling (locally) invariant manifolds for long time intervals, and they exist in very narrow (exponentially-small) parameter ranges. Canards have been found in many models of neural activity, both biophysical (e.g. Hodgkin-Huxley, Morris-Lecar) and phenomenological (FitzHugh-Nagumo) and their role in organising the transition between different activity regimes has long been established: they provide good approximation of firing threshold in spiking models, and they also exist along branches of bursting cycles, precisely at the transition between cycles with $N$- and $(N+1)$-spikes per burst. However, they have been little studied in the context of IF models and this is the main topic of the present work. 

We prove the existence of canard cycles at the interface, in parameter space, between $2$- and $3$-reset cycles, as an examplary spike-adding transition. On the computational side, we use numerical continuation to compute families of such cycles, including canard-explosive branches during spike-adding transitions, by recasting the planar periodic problem with PWL dynamics and resets into a two-point boundary value problem (BVP) of an extended problem formed by considering multiple copies of the original system. This BVP is suitable for standard numerical continuation, which we perform with the software package \textsc{auto}~\cite{auto}, showing in passing that one can compute parametrised families of limit cycles in IF systems within this standard framework at little cost. Finally, we show how the bursting resulting from this family of AIF models is akin to \textit{square-wave bursting} and highlight the similarities and differences with smooth models of square-wave bursting such as the Hindmarsh-Rose burster~\cite{desroches2013,hindmarsh1984,shilnikov2008}.

The rest of the paper is organised as follows. In Section~\ref{sec:systems}, we first introduce the class of systems that we will study, namely planar slow-fast systems with resets akin to integrate-and-fire neuron models with a slow adaptation variable. Then we present the dynamics of the two limiting problems obtained in the singular limit, that is, the slow and fast subsystems, respectively. Section~\ref{sec:dynamics} is devoted to explaining the spike-adding transitions observed in our model example. We provide a theoretical proof for the existence of cycles with resets as well as the canard-induced transition that connect them in parameter space. To fully unveil the canard-mediated spike-adding transition of interest, we resort to using numerical continuation. We explain in Section~\ref{sec:nbif} how to do continue cycles with reset across such transitions 
using two-point boundary-value problems. Section~\ref{sec:cplm} gives a brief description of the dynamics of a modified AIF model where the slow adaptation dynamics is coupled to the voltage variable. Then, Section~\ref{sec:snsmc} provides a comparison between AIF models and smooth square-wave bursters. Finally, we summarise our finding in Section~\ref{sec:conc} and highlight interesting directions for future work.  

\section{AIF models and their slow-fast analysis}
\label{sec:systems}
\subsection{Adaptive integrate-and-fire (AIF) models}
We consider the following family of AIF models
\begin{equation}\label{eq:modelgen}
\begin{split}
v' &= |v|-w+I\\
w' &=\eps F(v,w),
\end{split}
\end{equation}
where $I$ is a regular parameter representing an applied constant current, $0<\eps\ll1$ is a small parameter, the $0$ limit of which is relevant to the full system, and $F$ is a smooth function, which we will take to be linear in both arguments, for simplicity and without loss of generality on the dynamics of interest. We append to system~\eqref{eq:modelgen} the following reset rules: for $v=v_{\mathrm{thr}}$, we have  
\begin{equation}\label{eq:reset}
(v, \;w) \longrightarrow \left(v_{\mathrm{res}},\; w+k\right),
\end{equation}
meaning that as soon as $v$ reaches some threshold value $v_{\mathrm{thr}}$, the values of state variables $v$ and $w$ are reset to $v_{\mathrm{res}}$ and $w+k$, respectively. 
System~\eqref{eq:modelgen} is parametrised by the fast time $\tau$ and it is useful to rescale time so as to introduce the slow time $t=\eps\tau$, which brings system~\eqref{eq:modelgen} in its slow-time formulation, namely
\begin{equation}\label{eq:modelgenbis}
\begin{split}
\eps\dot{v} &= |v|-w+I\\
~~\dot{w}  &= F(v,w).
\end{split}
\end{equation}
The reset rules~\eqref{eq:reset} remain unchanged in the slow-time formulation. 

Classical IF models like the linear one (LIF) or the quadratic one (QIF) possess only one state variable, the membrane potential, and one reset rule associated with it. However, these minimal IF models can only account for rest and spiking dynamics. Emulating more complex dynamical regimes, such as that of \textit{bursting oscillations}, requires to introduce a second variable, which usually accounts for recovery or adaptation of the membrane dynamics and allows for a modulations of rest and spiking states in time; the second variable will also be subject to reset conditions. 

In the present work, we will study system~\eqref{eq:modelgen} in several configurations, depending on the slow adaptation dynamics given by function $F$, in the regime where such systems can produce bursting oscillations, that it, oscillatory solutions that alternate between a fast phase termed \textit{burst} and corresponding to spiking with a slow amplitude modulation, and a slow phase termed \textit{quiescence} corresponding to quasi-stationary dynamics. An example of such a periodic bursting solution obtained for $F(v,w)=-w$ is depicted in Figure~\ref{fig:dynaif}; its burst phase has 5 spikes, represented by 5 threshold-crossings and resets. Panel (a) displays the solution in the phase plane $(v,w)$ together the critical manifold $S_0$ (see below for a definition), the switching line $\{v=0\}$, the reset line $\{v=v_{\mathrm{thr}}\}$, the threshold line $\{v=v_{\mathrm{res}}\}$. Panel (b) shows the $v$ time profile for this solution together with the reset and the threshold lines. This time series has several key features of classical \textit{square-wave bursting}~\cite{izhikevich2000} and we will see that system~\eqref{eq:modelgen} can indeed be seen as a piecewise-linear square-wave burster with reset.

\begin{figure}[!t]
\centering
\includegraphics[width=\textwidth]{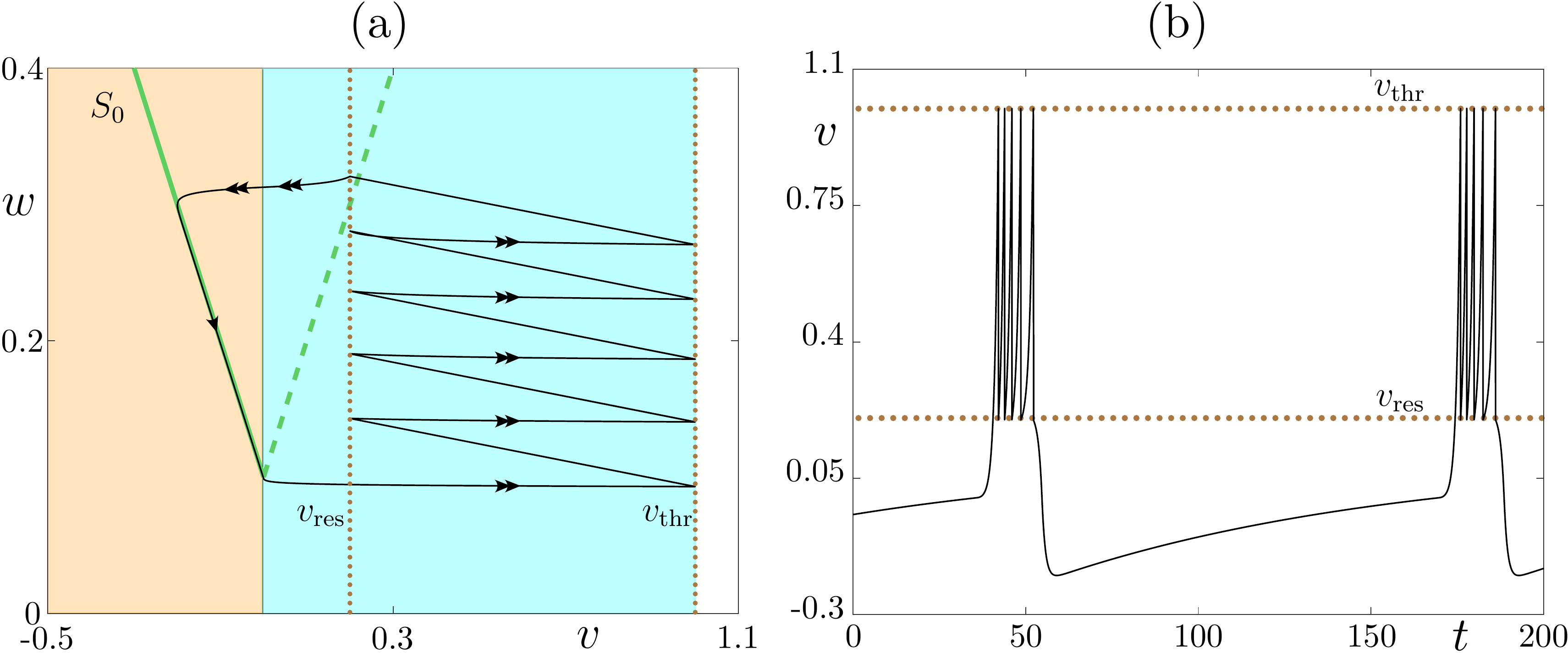}
\caption{Typical bursting dynamics of system~\eqref{eq:modelgen} for $F(u,w) = -w$ and the following fixed parameter values: $I=0.1$, $\eps=0.01$, $b=0$, $v_{\mathrm{res}}=0.2$,  $v_{\mathrm{thr}}=1$, $k=0.05$. In (a), the stable five-reset solution is shown together with the critical manifold $S_0$, the reset line $\{v=v_{\mathrm{res}}\}$ and the threshold line $\{v=v_{\mathrm{thr}}\}$. In (b), the time profile of the $v$-component of the solution is shown, together with reset and threshold lines.}
\label{fig:dynaif}
\end{figure}
The presence of the small parameter $\eps$ in system~\eqref{eq:modelgen} endows it with a slow-fast structure, whereby $v$ is a fast variable and $w$ is a slow variable. In the form given in~\eqref{eq:modelgen}, it is parametrised by the fast time $\tau$ and~\eqref{eq:modelgen} is the fast-time version of the model under consideration. Its $\eps=0$ limit yields the \textit{fast subsystem} or \textit{layer problem}, that is, a family of one-dimensional ODEs on $v$ parametrised by $w$, which becomes a parameter in this singular limit, together with one reset condition. 
In contrast, taking the $\eps = 0$ limit of the AIF model in its slow-time parametrisation~\eqref{eq:modelgenbis} yields a differential-algebraic equation (DAE) termed \textit{slow subsystem} or \textit{reduced system}. It consists of a differential equation for $w$ (unchanged from~\eqref{eq:modelgenbis}) constrained by the algebraic condition $0=|v|-w+I$, without any reset conditions.

The \textit{critical manifold} $S_0$ of system~\eqref{eq:modelgen}--\eqref{eq:reset}, or equivalently of system~\eqref{eq:modelgenbis}--\eqref{eq:reset}, is by definition the $\eps=0$ limit of its fast nullcline. Therefore in the present case we have $$S_0=\left\{0=|v|-w+I\right\}.$$ Note that both the slow-time and fast-time parametrisations of the AIF model are equivalent as long as $\eps\neq 0$. However their $\eps=0$ limits differ, and this is why the system is said to be singularly perturbed. The critical manifold plays a key role in both subsystems: it is the one-dimensional phase space of the slow subsystem as well as the set of equilibria of the fast subsystem.

\subsection{Fast subsystem analysis of~\eqref{eq:modelgen}--\eqref{eq:reset}}
\label{sec:fastsub}
%
%
\begin{figure}[!t]
\centering
\includegraphics[width=\textwidth]{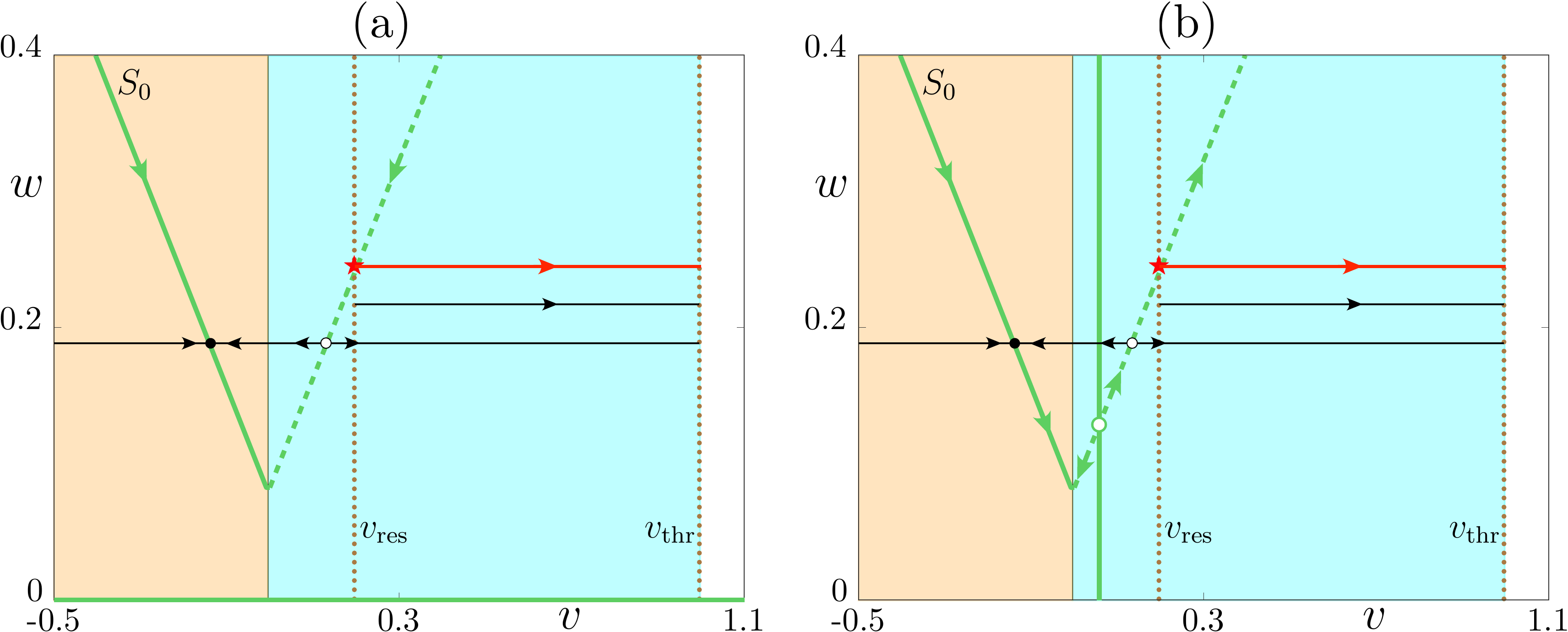}
\caption{Singular flows of~\eqref{eq:modelgen}--\eqref{eq:reset} we consider from the family of adaptive integrate-and-fire systems: slow flow (green arrows) on the critical manifold $S_0$; fast flow (black arrows) along fast fibers (black lines) with stable (filed circles) and unstable (unfiled circles); fast subsystems limit cycles (black segment with arrow) and nonsmooth homoclinic bifurcation (red segment with arrow and star). In panel (a), we take $F(v,w)=-w$; in panel (b), we consider $F(v,w)=v-b$, which allows for a slow flow equilibrium (unfiled green circle).}
\label{fig:singflows}
\end{figure}
The fast subsystem does not depend on the choice of $F$ since the dynamics of $w$ is frozen in the $\eps=0$ limit of~\eqref{eq:modelgen}. It reads
\begin{equation}\label{eq:fastsubsystem}
\begin{split}
v' &= |v|-w+I\\
w' &= 0 \\
v &=v_{\mathrm{thr}} \longrightarrow v=v_{\mathrm{res}}.
\end{split}
\end{equation}
Given that the equilibria of the fast subsystem must lie on $S_0$, one can easily show that~\eqref{eq:fastsubsystem} has no equilibria for $w<I$, one equilibrium for $w=I$ --- which can be considered stable for the $\{v\leq0\}$ system and unstable for the $\{v\geq0\}$ system --- and two equilibria for $w>I$, namely a stable one located in the $\{v\leq0\}$ zone and an unstable one located in the $\{v\geq0\}$ zone.
Hence, the system undergoes a nonsmooth saddle-node bifurcation of equilibria at $w=I$.

Due to the reset condition, one can easily prove that the fast subsystem possesses one stable limit cycle for $w < v_{\mathrm{res}}+I$, that is, up to the intersection point between the reset line and the critical manifold. As $w$ approaches this value from below, the period of the cycle grows unboundedly and when $w=v_{\mathrm{res}}+I$, then the first reset brings the system exactly at the (unstable) equilibrium where it stays for all future times. Therefore, what happens at $w=v_{\mathrm{res}}+I$ is a nonsmooth homoclinic bifurcation; see Figure~\ref{fig:singflows}(a) where the fast flow is indicated on lines $y=\mathrm{constant}$ with black arrows, stable (unstable) fast subsystem equilibria represented by a black dot (circle) and the nonsmooth homoclinic connection by a red segment.

\subsection{Slow subsystem analysis of~\eqref{eq:modelgen}--\eqref{eq:reset}}
\label{sec:slowsub}
Contrary to the fast subsystem, the slow subsystem does depend upon the choice of $F$. We will consider two cases with different slow flow directions in each one of them and hence different spike-adding sequences albeit organised by canard solutions in both cases. The first case that we will study, and which our theoretical results will be based upon, corresponds to the case $F(v,w)=b-w$, where $b$ is a parameter. Hence the slow subsystem, $\eps=0$ limit of system~\eqref{eq:modelgenbis}, becomes
\begin{equation}\label{eq:model1}
\begin{split}
0 &= |v|-w+I\\
~~\dot{w}  &= b - w.
\end{split}
\end{equation}

For $b=0$, the flow of the slow subsystem, typically referred to as \textit{slow flow}, is trivial and points down along each branch of the critical manifold without having any equilibria; see Figure~\ref{fig:singflows}(a) where the slow flow is indicated by green arrows on the critical manifold $S_0$. In particular, the corner point of the critical manifold is the PWL equivalent of a \textit{jump point}. This means that, for $\eps>0$ small enough, trajectories switch from slow to fast motion near this point. With a two-piece PWL critical manifold, this point cannot be a \textit{canard point} (i.e. a turning point of the slow flow) as, in order to have a canard point in a PWL setting, one must have three linearity zones to approximate the quadratic fold point of the critical manifold of the smooth slow-fast system~\cite{arima1997,desroches2018}. For $b = I$, there is a semistable equilibrium point on $S_0$, and for $b > I$ there is a pair of stable equilibria on $S_0$ which therefore appear via a nonsmooth saddle-node bifurcation at $b=I$. However, we will not consider the case $b \not = 0$ here.  

In Section~\ref{sec:cplm}, we will also consider an alternative AIF model corresponding to a type II neuronal model, for which we take $F(v,w)=v-b$. In this case, the slow flow has an equilibrium that is unstable for $b>0$, and this allows for the slow flow to go up along the repelling part of the critical manifold, see Figure~\ref{fig:singflows}(b). This will have some importance when investigating the reset-induced canard solutions in this configuration.

\section{Spike-adding in system~\eqref{eq:modelgen}--\eqref{eq:reset} with $F(v,w)=b-w$}
\label{sec:dynamics}
\subsection{Setup}
The present work is mainly focused on explaining so-called spike-adding occurring under parameter variation in system~\eqref{eq:modelgen} in the case where $F(v,w)=b-w$, for which we will provide a theoretical analysis of the phenomenon using methods from nonsmooth dynamical systems theory~\cite{bernardo2008}. We will give elements concerning the case $F(v,w)=v-b$ at the end of the paper mostly to highlight the similarities and differences in the spike-adding transitions. To do so, we will first define limit cycles with $n$ resets, for some integer $n\geq 1$, which we will term \textit{$n$-reset cycles}. Let us also denote by $\phi$ an evolution operator for $t \geq 0$ of system~\eqref{eq:modelgen} (or equivalently~\eqref{eq:modelgenbis}) with reset condition~\eqref{eq:reset}, which is a forward time composition of flows and resets. Operator $\phi$ satisfies the standard two properties of evolution operators, that is $\phi^0 = \mbox{id}$, where $\mbox{id}$ denotes the identity, and $\phi^{t + s}  =   \phi^{t}\circ \phi^{s}$, where times $t$ and $s$ are both positive and ``$\circ$'' denotes composition. Note however that the evolution operator $\phi$ includes resets, which act in zero time, and their order of occurrence is strictly determined given an initial state $(v_0,w_0)$.  
\begin{definition}\label{def:nrespercyc}
 An \textit{$n$-reset periodic cycle} $L(v,w,n)$, where $n$ is a positive integer, is a subset of the state space of system \eqref{eq:modelgen}(or \eqref{eq:modelgenbis})--\eqref{eq:reset} such that each point $P_0  = (v_0,w_0)\in L$ satisfies $\phi(v_0,w_0, t) = \phi(v_0,w_0, t + T)\in L$, for some $T > 0$ and any $t\geq 0$. Moreover, there are  $n$ time instants when system \eqref{eq:modelgen}(or \eqref{eq:modelgenbis})--\eqref{eq:reset} reaches the threshold value $v  = v_{\mathrm{thr}}$ for $t\in (0, \, T]$, starting an evolution at any point $(v_0,w_0)\in L$.
\end{definition}
 
 \begin{definition}\label{def:nrescyc}
 An \textit{$n$-reset limit cycle} $L(v,w,n)$ (or just \textit{$n$-reset cycle}), where $n$ is a positive integer, is a subset of the state space of system \eqref{eq:modelgen}(or \eqref{eq:modelgenbis})--\eqref{eq:reset} such that $L$ is an $n$-reset periodic cycle and a limit set of system \eqref{eq:modelgen}(or \eqref{eq:modelgenbis})--\eqref{eq:reset}. 
\end{definition}
\begin{remark}
Given that the reset is instantaneous, in Definition~\ref{def:nrespercyc} we have used the time interval in which we count the time instances of resets for $t\in (0, \, T]$. This ensures that if the initial $v_0$ is equal to $v_{\mathrm{thr}}$, then we do not count it as reaching the threshold twice. Note that if we chose $t\in [0, \, T)$ then if $v_0  = v_{\mathrm{res}}$ we would miss out one reset, and hence the latter choice of the time interval is not used in Definition~\ref{def:nrespercyc}.     
\end{remark}
\begin{remark}
Since we consider only positive times, an $n-$reset cycle according to Definition~\ref{def:nrescyc} would imply an attractor since only $\omega-$limit sets would be possible in the model system. However, unstable $n$-reset periodic orbits are also possible in system \eqref{eq:modelgen}(or \eqref{eq:modelgenbis})--\eqref{eq:reset}. Thus an $n$-reset cycle in the text will refer to either  a stable or unstable $n$-reset periodic cycle.     
\end{remark}

We first present numerical evidence that system~\eqref{eq:modelgen}--\eqref{eq:reset} can undergo spike-adding transitions within very narrow parameter intervals. Figure~\ref{fig:adding} depicts a transition between $2$- and $3$-reset cycles, obtained by direct simulation and which is occurring under parameter variation in the system, for $F(v,w) = -w$. This is a good incentive to look at this system with analytical tools in order to decipher the mechanisms underpinning such complex dynamics happening within such a narrow parameter band. In particular, we will consider how the asymptotic dynamics changes when we vary parameter $k$. The two simulations shown in Figure~\ref{fig:adding} are performed for the following fixed parameter values: $I=0.1$, $\eps=0.05$, $b=0$, $v_{\mathrm{res}}=0.2$,  $v_{\mathrm{thr}}=1$, and for a minute variation of parameter $k$ giving rise to three different periodic attractors with canard segments. 
Although the difference in the value of $k$ is remarkably small, the difference in the asymptotic dynamics is of order 1: in panel (a) ($k=0.15033$), the dynamics is attracted towards a stable limit cycles with three fast oscillations or \textit{spikes} whereas in panel (b) ($k=0.15034$) it is attracted towards a stable limit cycle with four spikes; finally, the periodic attractor displayed in panel (c) ($k=0.15037$) has two spikes per period. Note that spikes here are discontinuous, even though in the figure, the cycles appear as continuous curves for convenience.

\begin{figure}[!t]
\centering
\includegraphics[width=\textwidth]{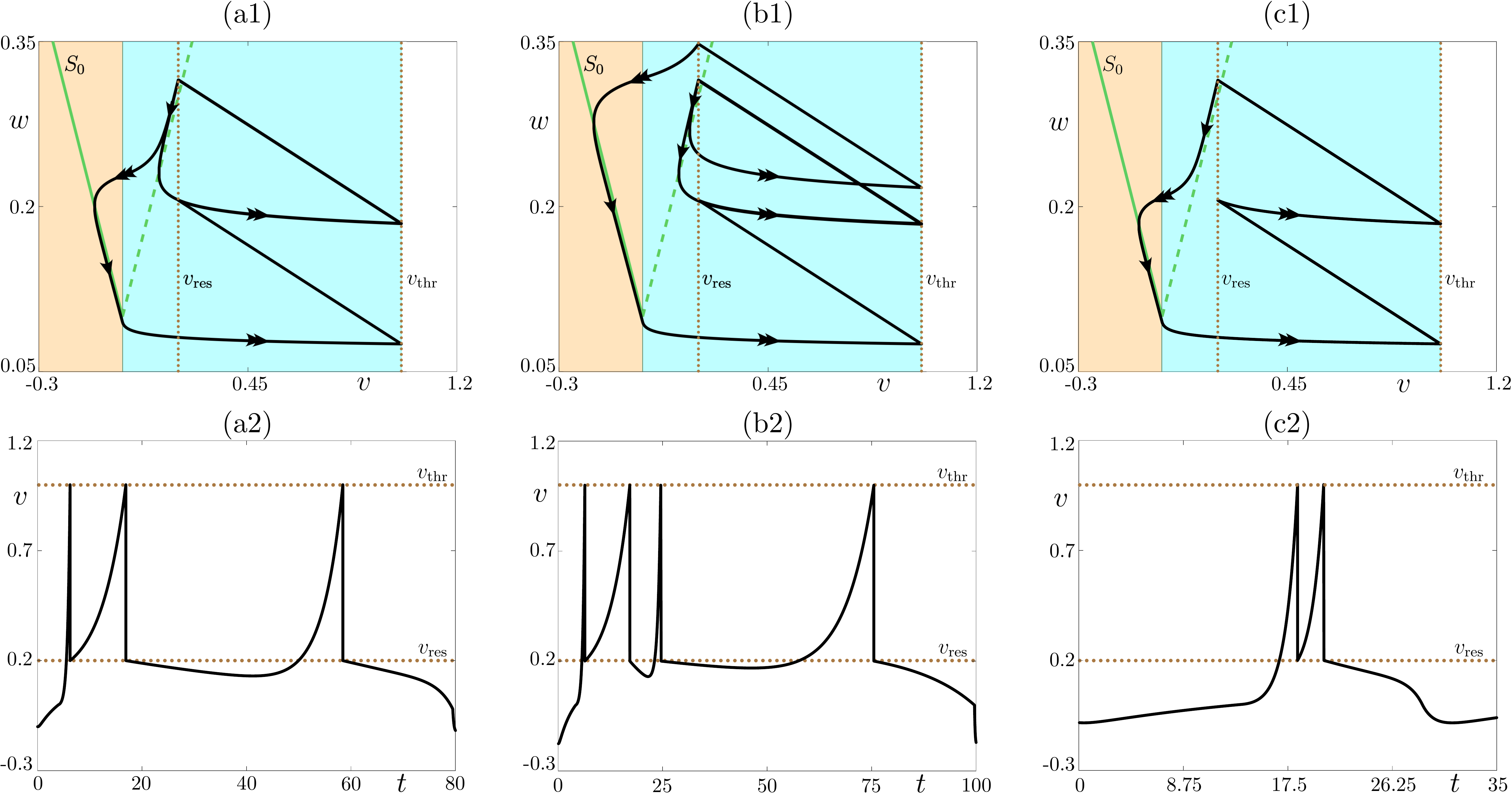}
\caption{Simulation of system \eqref{eq:modelgen}--\eqref{eq:reset} with $F(v,w)=-w$, and for the following fixed parameter values: $I=0.1$, $\eps=0.05$, $b=0$, $v_{\mathrm{res}}=0.2$,  $v_{\mathrm{thr}}=1$. We set $k$ to: (a) $k=0.15033$, which leads to a three-reset cycles; (b) $k=0.15034$, which leads to a four-reset cycle; (c) $k=0.15037$, which leads to a two-reset cycle. Top panels show the solution in the phase plane together with the critical manifold $S_0$, the reset line $\{v=v_{\mathrm{res}}\}$ and the threshold line $\{v=v_{\mathrm{thr}}\}$; bottom panels show the time profile for $v$ over one period, together with reset and threshold lines.}
\label{fig:adding}
\end{figure}
%
\subsection{Analysis through reset-induced canard cycles}
\label{sec:cit}
In the current section, we will explain the spike-adding mechanism presented above through direct simulations. To this aim, we will first prove the existence of periodic cycles with $N$ resets, for any integer $N  \geq 1$. 
\subsubsection{Existence of cycles with resets}
Consider system~\eqref{eq:modelgen}--\eqref{eq:reset} with $b  = 0$, and parameters $v_{\mathrm{res}}$, $I$, $k$ and $v_{\mathrm{thr}}$ positive. We first rewrite the system in a piecewise manner as
\beq
f^+ :\begin{cases} v^\prime = v - w + I,\\
w^\prime = -\eps w,\quad\mbox{for}\quad 0 \leq v < v_{\mathrm{thr}},  \end{cases}
\label{eq:vfp}
\eeq
\beq
f^- :\begin{cases} v^\prime = -v - w + I,\\
w^\prime = -\eps w,\quad\mbox{for}\quad  v < 0,  \end{cases}
\label{eq:vfm}
\eeq
and the reset condition as
\beq
R:\quad (v,\, w) \mapsto (v_{\mathrm{res}},\,w + k)\quad\quad\mbox{for}\quad\quad v = v_{\mathrm{thr}}.
\label{eq:rst}
\eeq
The flow solutions corresponding to $f^\pm$ are given by 
\beq
\phi_+: \begin{cases} v(t) = -I + \frac{\ds w_0}{\ds 1+ \eps }(\exp(-\eps t) - \exp(t)) + \left(v_0 + I \right)\exp(t),\\
w(t) = w_0\exp(-\eps t), 
\end{cases} 
\label{eq:flsp}
\eeq
and
\beq
\phi_-: \begin{cases} v(t) = I + \frac{\ds w_0}{\ds 1 -\eps}(\exp(-t) - \exp(-\eps t)) + (v_0 - I)\exp(-t),\\
w(t) = w_0\exp(-\eps t). 
\end{cases} 
\label{eq:flsm}
\eeq
The attracting and repelling parts of the critical manifold of the system are defined as: 
\beq\label{eq:crM}
\begin{split}
S_0^+ &= \{(v,\, w)\in \mathbb{R}^+\cup\{0\}\times\mathbb{R}\,\,:\,\, w = v + I\}\\
S_0^- &=\{(v,\, w)\in \mathbb{R}^-\times\mathbb{R}\,\, :\,\, w = -v + I\},
\end{split}
\eeq
respectively. Likewise, attracting and repelling slow manifolds of the full system are defined as:
\beq\label{eq:slM}
\begin{split}
S_\eps^+ &= \{(v,\, w)\in \mathbb{R}^+\cup\{0\}\times\mathbb{R} \,\, :\,\, w = (1+\eps)( v + I)\}\\
S_\eps^- &=\{(v,\, w)\in \mathbb{R}^-\times\mathbb{R}\,\, :\,\, w = (\eps - 1)(v - I)\},
\end{split}
\eeq
respectively; see~\cite{prohens2013} for details.
\begin{proposition}
Consider system~\eqref{eq:vfp}-\eqref{eq:rst} with positive parameters $v_{\mathrm{thr}} = {\mathcal{O}}(1)$, $I = \mathcal{O}(v_{\mathrm{res}})$. Then for every $\eps>0$ small enough and $w_0 > (1+\eps)(v_{\mathrm{res}} + I)$, there exists $k > 0$ such that the system possesses a limit cycle characterised by $N\geq 1$ resets and a periodic point $(v_{\mathrm{res}}, w_0)$.  
\end{proposition}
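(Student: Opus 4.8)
The plan is to reduce the existence of the periodic orbit to a fixed-point problem for a one-dimensional first-return map along the reset-image line $\Sigma=\{v=v_{\mathrm{res}}\}$, and to choose $k$ so that this map fixes $w_0$, using the explicit flows \eqref{eq:flsp}--\eqref{eq:flsm} and the slow manifolds \eqref{eq:slM} to control it. Parametrising $\Sigma$ by $w$ and writing $\theta:=(1+\eps)(v_{\mathrm{res}}+I)$, one first reads off from the formula for $v(t)$ in $\phi_+$ that the coefficient of $e^{t}$ in a forward orbit from $(v_{\mathrm{res}},w)$ inside $\{0\le v<v_{\mathrm{thr}}\}$ equals $(v_{\mathrm{res}}+I)-w/(1+\eps)$; hence that orbit leaves the strip through $v=v_{\mathrm{thr}}$ (triggering a reset) when $w<\theta$, and through $v=0$ into $\{v<0\}$ when $w>\theta$. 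This dichotomy is precisely why the hypothesis $w_0>(1+\eps)(v_{\mathrm{res}}+I)$ is natural: the orbit through $(v_{\mathrm{res}},w_0)$ begins with a \emph{quiescent} excursion. It produces two partial return maps on $\Sigma$: a spiking map $\pi_s$ (domain $w<\theta$), obtained by integrating $f^+$ (possibly through a brief dip toward $\{v=0\}$) up to the first hit of $v_{\mathrm{thr}}$, and a quiescent map $\pi_q$ (domain $w>\theta$), obtained by integrating $f^+$ to $\{v=0\}$, then $f^-$ through the passage near the corner $(0,I)$ and back to $\{v=0\}$, then $f^+$ to $v_{\mathrm{thr}}$. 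Transversality of all the $\{v=0\}$- and $\{v=v_{\mathrm{thr}}\}$-crossings is immediate from $f^\pm$, so $\pi_s$ and $\pi_q$ are smooth on their domains, and post-composing with the reset $R$ gives the two branches of the full return map.

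The second ingredient is that the quiescent branch is a strong contraction. Using $\phi_-$ and the attracting slow manifold $S_\eps^-$, an orbit entering $\{v<0\}$ with $w>\theta$ is attracted to $S_\eps^-$ at a rate exponential in $1/\eps$, then drifts toward the corner along it with $w$ strictly decreasing (since $w'=-\eps w<0$), and finally exits at $v=v_{\mathrm{thr}}$ with $\pi_q(w)\to I$ as $\eps\to0$, uniformly for $w$ in compact sets, and with $|\pi_q'(w)|=\mathcal{O}(e^{-c/\eps})$. Since $b=0$ the corner $(0,I)$ is an ordinary piecewise-linear jump point and not a canard point, so this passage costs only a subleading delay and does not spoil the estimate. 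In particular $\pi_q(w)<\theta$ for $\eps$ small, so that after the first reset the orbit lands at $w_1:=\pi_q(w_0)+k$.

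To close the orbit one iterates the spiking branch: as long as $w_j<\theta$ a further reset occurs with $w_{j+1}=\pi_s(w_j)+k$, and for $k$ larger than a suitable multiple of $\eps$ this sequence increases and first exceeds $\theta$ at some index $N=N(k,w_0)$, at which point the orbit re-enters $\Sigma\cap\{w>\theta\}$ having made exactly $N$ resets. The composed first-return map $w_0\mapsto w_N$ has derivative $\pi_q'(w_0)\prod_{j=1}^{N-1}\pi_s'(w_j)$, which has modulus $\mathcal{O}(e^{-c/\eps})$ because each factor $\pi_s'(w_j)$ is $1+\mathcal{O}(\eps)$ away from the canard regime; hence any fixed point is hyperbolic and attracting, so that a periodic orbit obtained in this way is automatically a (stable, hence isolated) limit cycle. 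It remains to pick $k$ making $w_N=w_0$. The immediate choice $k:=w_0-\pi_q(w_0)$ is positive by the previous paragraph and gives $w_1=w_0$, i.e.\ an $N=1$ reset periodic cycle through $(v_{\mathrm{res}},w_0)$, which already establishes the proposition. More generally, fixing $w_0$ close to $\theta$ and decreasing $k$ into the window $J_N=\{k:\,w_1,\dots,w_{N-1}<\theta\le w_N\}$ on which $N(k,w_0)=N$, the value $w_N(k)$ depends continuously and monotonically on $k$ across the clean interior of $J_N$ and sweeps a range of $w$-values containing $w_0$; an intermediate-value argument then yields $k\in J_N$ with $w_N=w_0$, i.e.\ an $N$-reset limit cycle through $(v_{\mathrm{res}},w_0)$. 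Since the number of nonempty such windows grows without bound as $w_0\downarrow\theta$, every $N\ge1$ is realised.

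The main obstacle is the fine behaviour near the two fold-like structures. First, the passage across the corner $(0,I)$ must be controlled precisely enough (a piecewise-linear entry--exit/delayed-jump estimate) to obtain the uniform limit and the contraction of $\pi_q$; this is easier than a genuine canard passage exactly because $b=0$ makes the corner a plain jump point. Second, and more seriously, when $k$ is comparable to $\eps$ — equivalently, when some iterate $w_j$ approaches $\theta$ from below — the orbit hugs the repelling slow manifold $S_\eps^+$, the transit time of $\pi_s$ diverges, $\pi_s$ loses smoothness in the canard limit, and the first-return map can jump between the windows $J_N$. That regime is exactly the spike-adding/canard scenario analysed in the rest of the paper; for the present proposition one only needs the windows $J_N$ to be nonempty with clean interiors, which follows from the dichotomy above and the monotonicity of $\pi_s$ on compact subsets of $\{w<\theta\}$.
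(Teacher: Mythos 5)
Your argument is correct and rests on essentially the same skeleton as the paper's proof: both follow the orbit from $(v_{\mathrm{res}},w_0)$ through one quiescent excursion into $\{v<0\}$ and then through successive spiking segments, both use the explicit flows \eqref{eq:flsp}--\eqref{eq:flsm} and the dichotomy at $w=(1+\eps)(v_{\mathrm{res}}+I)$ (sign of the coefficient of $e^{t}$ in $\phi_+$) to decide whether a point of $\{v=v_{\mathrm{res}}\}$ spikes or escapes leftward, and both close the cycle by imposing $w_N=w_0$ and solving for $k$. The differences are worth noting. The paper writes the post-reset values explicitly as geometric-type sums in $k$ (see \eqref{eq:w1wN}) and solves relation \eqref{eq:exst} for $k$ directly, checking the chain $w_0>(1+\eps)(v_{\mathrm{res}}+I)>w_{N-1}>\dots>w_1$ with the estimates $t_1+\dots+t_4=\mathcal{O}(1/\eps)$ and $t_{i+3}=\mathcal{O}(1)$; this quietly treats the transit times as independent of $k$. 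You instead package the construction as a two-branch return map and obtain the $N$-reset fixed point by an intermediate-value argument in $k$ over the window $J_N$, which handles the implicit $k$-dependence of the times more honestly, at the price of some vagueness about the monotonicity of $w_N(k)$ near the canard boundary of $J_N$ (the paper is comparably loose there, invoking an adjustment of the threshold parameters). Your contraction estimate $|\pi_q'|=\mathcal{O}(e^{-c/\eps})$ is an addition the paper does not make here: it is what actually upgrades ``periodic cycle'' to ``limit cycle,'' a point the paper defers to the later Floquet discussion of the $2\to3$ transition. Finally, your choice $k=w_0-\pi_q(w_0)$ for $N=1$ is exactly the paper's relation $w_0=k/(1-e^{-\eps(t_1+t_2+t_3+t_4)})$ read in the other direction, and it matches the quantifier order of the statement ($w_0$ given, $k$ to be found) more cleanly.
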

\begin{proof}
Starting from given point $(v_0, w_0) = (v_{\mathrm{res}}, w_0)$ the system evolves to some point $(0, w_I)$ where 
$w_I = w_0\exp(-\eps t_1)$. Note that $v^\prime$ is negative at the initial point and it remains negative following all points along the trajectory until it can become $0$ on $S_0^-$. So, there must exist $t_1 > 0$ when the trajectory reaches $\{v = 0\}$.\newline At $(0,w_I)$, $v^\prime < 0$ as this is not the set $S_0$ and so the system will follow $\phi_-$ until it crosses $S_0^-$ at some point $(v_1,w_1)$, and then the trajectory will start mowing back to the right until some point $(0,w_{II})$ is reached. Note that there must exist some time $t_2 > 0$, which is the flight time from $(0,w_I)$ to $(v_1,w_1)$, since at all points along the trajectory, from $(0,w_I)$ to $(v_1,w_1)$, $v^\prime \leq 0$ (it is $0$ only at $(v_1,w_1)$) and $w^\prime < 0$. Similarly, there must exist some time $t_3 > 0$, which is the flight time from $(v_1,w_1)$ to $(0,w_{II})$ since at all points along the trajectory, from $(v_1,w_1)$ to $(0,w_{II})$, $v^\prime \geq 0$ (it is $0$ only at $(v_1,w_1)$) and $w^\prime < 0$. Furthermore $0 < w_{II} < I$ since $w_{II} = w_0\exp(-\eps (t_1 + t_2 + t_3))>0$ and the trajectory  crossed $S_0^-$ only at $(v_1,w_1)$. \newline
Consider first the case $N = 1$. At $(0,w_{II})$, $v^\prime > 0$ and it remains so along the trajectory until $v_{\mathrm{thr}}$ is reached at $v_{III} = v_{\mathrm{thr}},\, w_{III} = w_0\exp(-\eps(t_1 + t_2 + t_3 + t_4))$. Again, the existence of $t_4 > 0$ is guaranteed since $v^\prime > 0$ along the trajectory. 
 At this point we apply the reset map~\eqref{eq:rst} and obtain the final point $v_{IV} = v_{\mathrm{res}},\, w_{IV} = w_0\exp(-\eps(t_1 + t_2 + t_3 + t_4)) + k$. For a periodic cycle to exist, we require that there exists some value of $k > 0$ which 
satisfies 
$$
w_0 = \frac{k}{1 - \exp(-\eps(t_1 + t_2 + t_3 + t_4))} > (1+\eps)(v_{\mathrm{res}} + I).
$$
Using vector fields $f^\pm$, we may show that $t_1 + t_2 + t_3 + t_4 = \mathcal{O}(1/\eps)$. We may then set $k = (1+\eps)(v_{\mathrm{res}} + I)$, and the inequality above is satisfied. 

Consider now the case $N = 2$. All the steps are the same until the point $w_{IV} \not = w_0$ is reached. In this case, the inequality condition $w_{IV} > (1+\eps)(v_{\mathrm{res}} + I)$ is violated and from $(v_{\mathrm{res}}, w_{IV})$ the trajectory evolves towards threshold $v_{\mathrm{thr}}$. Again, $v^\prime$ is positive along the trajectory segment from $(v_{\mathrm{res}}, w_{IV})$ until the threshold is reached and the existence of $t_5 > 0$, which is the flight time from 
$(v_{\mathrm{res}}, w_{IV})$ to $(v_{\mathrm{thr}}, w_{V})$, is guaranteed with 
$$
w_V = w_0\exp(-\eps(t_1 + t_2 + t_3 + t_4 + t_5)) + k \exp(-\eps(t_5)).  
$$
At $(v_{\mathrm{thr}}, w_{V})$, the reset map is applied, which yields the final point 
$$
v = v_{\mathrm{res}}, \quad w_F = w_0\exp(-\eps(t_1 + t_2 + t_3 + t_4 + t_5)) + k \exp(-\eps(t_5)) + k.
$$
For the $2-$reset periodic cycle to exist, we require
$$
w_0 = \frac{k(1+ \exp(-\eps(t_5)))}{1 - \exp(-\eps(t_1 + t_2 + t_3 + t_4 + t_5))} > (1+\eps)(v_{\mathrm{res}} + I) 
$$
and 
$$
\frac{k(1+ \exp(-\eps(t_5)))\exp(-\eps(t_1 + t_2 + t_3 + t_4))}{1 - \exp(-\eps(t_1 + t_2 + t_3 + t_4 + t_5))} + k < (1+\eps)(v_{\mathrm{res}} + I). 
$$
The second inequality simplifies to
$$
\frac{k(1 + \exp(-\eps(t_1 + t_2 + t_3 + t_4)))}{1 - \exp(-\eps(t_1 + t_2 + t_3 + t_4 + t_5))} < (1+\eps)(v_{\mathrm{res}} + I). 
$$
Without loss of generality, let us assume that point $w_{IV}$ is at least $\mathcal{O}(\eps)$ away from the slow manifold $S_\eps^+$, that is $(1+\eps)(v_{\mathrm{res}} + I) - w_{IV} > \eps$. In this case one can verify that $t_5 = \mathcal{O}(1)$. 

Let $D = 1 - \exp(-\eps(t_1 + t_2 + t_3 + t_4 + t_5))$ and $T = (1+\eps)(v_{\mathrm{res}} + I)$. One may then write the two inequalities as  
$k(1 + \exp(-\eps(t_1 + t_2 + t_3 + t_4))) < TD - \eps D$ and $k(1+ \exp(-\eps(t_5))) > TD$.
Since as before, $t_1 + t_2 + t_3 + t_4 = \mathcal{O}(1/\eps)$ and $t_5 = \mathcal{O}(1)$,
it is clear that $(1 + \exp(-\eps(t_1 + t_2 + t_3 + t_4))) < (1+ \exp(-\eps(t_5)))$, and some $k > 0$ may be chosen ensuring that both inequalities are satisfied. For example, one may choose $k = 3/5TD$. This completes the case for $N = 2$. 

We now turn to the existence of an $N$-cycle, by which is meant a cycle with $N$ resets for $N \geq 3$. Define first the values of variable $w$ immediately after resets, that is when $v$ reaches $v_{\mathrm{thr}}$ and is set to $v = v_{\mathrm{res}}$. Thus, let $w_i$ for $i = 1,\,2,\,\ldots,\,N$ correspond to the values of $w$ at times $t_1+t_2+t_3 +,\,\ldots,\,+t_{i + 3}$, for $i = 1,\,\ldots,\,N$, where $T_i = t_1+t_2+t_3 +,\,\ldots,\,+t_{i + 3}$ refers to the time of $i$th reset. Specifically, 
\begin{equation}
w_i = w(T_i)\,\,\,\mbox{when}\,\,\,\lim_{t\downarrow T_i}v(t) = v_{\mathrm{thr}}.
\label{eq:rstW}
\end{equation} 
We then have:
\begin{equation}\label{eq:w1wN}
\begin{split}
 w_1 & = \mbox{e}^{-\eps(t_1 + t_2 + t_3 + t_4)}w_0 + k,\\
 w_2 & = \mbox{e}^{-\eps(t_1 + t_2 + t_3 + t_4 + t_5)}w_0 + k\mbox{e}^{-\eps t_5} + k,\\
        & \vdots\\
w_N & = \mbox{e}^{-\eps(t_1 + t_2 + t_3 + t_4 + t_5 + \ldots + t_{N + 3})}w_0 + k\mbox{e}^{-\eps (t_5 + t_6 + \ldots + t_{N + 3})} + \ldots\\ 
         & \quad k\mbox{e}^{-\eps (t_6 + t_7 + \ldots + t_{N + 3})} + \ldots + k\mbox{e}^{-\eps t_{N + 3}} + k.
\end{split}
\end{equation}
As in the case for $N = 1$ or $N = 2$, using vector fields $f^\pm$ we can show that $t_1 + t_2 + t_3 + t_4 = \mathcal{O}(1/\eps)$. Without loss of generality, let us suppose that $w_{N-1}$ is at least $\mathcal{O}(\eps)$ away from $S_\eps^+$, that is $(1+\eps)(v_{\mathrm{res}} + I) - w_{N - 1} > \eps$. Using vector field $f^+$, we may then show that 
$t_{i + 3} = \mathcal{O}(1)$ (for $i = 2,\ldots, N$) and if $k > t_{N+3}(v_{\mathrm{res}} + I)\eps + \mathcal{O}(\eps^2)$ we have that:
\beq
t_1 + t_2 + t_3 + t_4 > t_{N+3} > t_{N+2} > t_{N + 1} > \ldots > t_5, 
\eeq
and 
\beq
w_N > w_{N - 1} > \ldots > w_2 > w_1.
\label{eq:inseq}
\eeq
For the existence of an $N$-reset cycle, we require 
\beq
w_N = w_0 >(1+\eps)(v_{\mathrm{res}} + I) > w_{N - 1} > \ldots > w_2 > w_1.
\eeq
It implies that 
\beq\label{eq:exst}
\begin{split}
w_0 & = k\frac{ \mbox{e}^{-\eps (t_5 + t_6 + \ldots + t_{N + 3})} + \mbox{e}^{-\eps (t_6 + t_7 + \ldots + t_{N + 3})} + \ldots + \mbox{e}^{-\eps t_{N + 3}} + 1}{1-\mbox{e}^{-\eps (t_1 + t_2 + \ldots + t_{N + 3})}} >  \ldots\\
        &\quad (1+\eps)(v_{\mathrm{res}} + I) > w_{N-1} > w_{N-2} >\ldots > w_1.
\end{split}
\eeq
Clearly, $k > t_{N+3}(v_{\mathrm{res}} + I)\eps + \mathcal{O}(\eps^2)$ may be chosen so that condition~\eqref{eq:inseq}
holds and by appropriately adjusting the threshold parameters $(1+\eps)(v_{\mathrm{res}} + I)$ one may ensure that 
condition~\eqref{eq:exst} holds as well. This completes the proof. 
\end{proof}
We should note here that the number of resets is a function of system parameters, and the structure unraveled by Proposition 1 guarantees that at least one periodic cycle with some finite number of resets may be found. However, it does not touch upon whether or not there is an upper bound on the number of resets. 
\subsubsection{Existence of canard cycles with resets}
\begin{proposition}
Consider system~\eqref{eq:vfp}-\eqref{eq:rst} with positive parameters $v_{\mathrm{thr}} = {\mathcal{O}}(1) \gg v_{\mathrm{res}}$, $I = {\mathcal{O}}(v_{\mathrm{res}})$. Then for every $\eps>0$ small enough and $w_0 = (1+\eps)(v_{\mathrm{res}} + I)$, there exists $k > 0$ such that the system possesses an $N$-reset periodic cycle and a periodic point $(v_{\mathrm{res}}, w_0)$ with the following set of inequalities~\eqref{eq:exst}: 
\beq
w_0 =  (1+\eps)(v_{\mathrm{res}} + I) = w_{N} > w_{N-1} > w_{N-2} >\ldots > w_1,
\label{eq:exstCN}
\eeq
where $w_i$, $1\leq i\leq N-1$, are the values of the $w$-component of the system immediately after the $i$th reset as
 defined by equation~\eqref{eq:rstW}. Such a periodic cycle is a canard cycle. 
\end{proposition}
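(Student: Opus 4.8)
The plan is to rerun the construction from the proof of Proposition~1, but with the value $w_0$ of $w$ after the last reset now \emph{prescribed} to equal $(1+\eps)(v_{\mathrm{res}}+I)$, and then to show that this prescription forces a canard segment into the cycle. The key observation is that the point $(v_{\mathrm{res}},w_0)$ with $w_0=(1+\eps)(v_{\mathrm{res}}+I)$ is exactly the intersection of the reset line $\{v=v_{\mathrm{res}}\}$ with the repelling slow manifold $S_\eps^+$ of~\eqref{eq:slM}. Substituting $w_0=(1+\eps)(v_0+I)$ into the explicit flow $\phi_+$ of~\eqref{eq:flsp} shows that $S_\eps^+$ is \emph{exactly} forward invariant: the $\mathrm{e}^{t}$ term in $v(t)$ cancels, leaving $v(t)=-I+(v_0+I)\mathrm{e}^{-\eps t}$, $w(t)=(1+\eps)(v(t)+I)$. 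Since a perturbation $w_0\mapsto w_0+\delta$ adds a term $-\tfrac{\delta}{1+\eps}\mathrm{e}^{t}$ to $v(t)$, neighbouring orbits separate from $S_\eps^+$ at the fast rate $\mathrm{e}^{t}$, so $S_\eps^+$ is normally repelling and any trajectory lying on it is a canard by definition.

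I would then trace the cycle forward from $(v_{\mathrm{res}},w_0)$. By invariance the orbit remains on $S_\eps^+$ with $v$ decreasing at rate $\mathcal{O}(\eps)$ until it reaches $\{v=0\}$ at $(0,(1+\eps)I)$, after fast time $t_1=\tfrac1\eps\ln\frac{v_{\mathrm{res}}+I}{I}=\mathcal{O}(1/\eps)$, the argument of the logarithm being an $\mathcal{O}(1)$ quantity bounded away from $1$ because $I=\mathcal{O}(v_{\mathrm{res}})$. This is the canard segment: the cycle follows the repelling slow manifold for an $\mathcal{O}(1)$ interval of slow time, having been injected onto it by the $N$th reset --- which is possible precisely because $w_{N-1}<(1+\eps)(v_{\mathrm{res}}+I)$, so the map $w\mapsto w+k$ can land the orbit exactly on $S_\eps^+$. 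From $(0,(1+\eps)I)$ onward the analysis is identical to that of Proposition~1: the quiescent excursion through $\{v<0\}$ (one crossing of $S_0^-$, return to $\{v=0\}$ at some $w_{II}<I$), followed by $N$ consecutive spikes with resets producing the post-reset values $w_1,\dots,w_N$ of~\eqref{eq:w1wN}, with $t_1+t_2+t_3+t_4=\mathcal{O}(1/\eps)$, spike times $t_{j+3}=\mathcal{O}(1)$ for $j=2,\dots,N$ as long as the $w_i$ remain $\mathcal{O}(\eps)$ away from $S_\eps^+$, and monotonicity $w_1<\dots<w_N$ once $k>t_{N+3}(v_{\mathrm{res}}+I)\eps+\mathcal{O}(\eps^2)$.

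It remains to close the cycle. Imposing $w_N=w_0=(1+\eps)(v_{\mathrm{res}}+I)$ turns the construction into a single scalar equation in $k$, all flight times being functions of $k$ and the fixed parameters. Here $t_1,t_2,t_3,t_4$ are \emph{independent} of $k$, so that $w_1=c_1+k$ with $c_1=w_0\,\mathrm{e}^{-\eps(t_1+t_2+t_3+t_4)}$ a fixed constant in $(0,w_0)$, and $w_2,\dots,w_N$ depend continuously on $k$ through the recursion $w_j=w_{j-1}\mathrm{e}^{-\eps t_{j+3}(k)}+k$. An intermediate-value argument then produces $k^\ast$: as $k$ ranges over an interval with lower end $\mathcal{O}(\eps v_{\mathrm{res}})$ (which secures $w_1<\dots<w_N$) and upper end $w_0-c_1$ (which secures $w_1<w_0$, so that the intermediate resets are genuine spikes), the value $w_N(k)$ sweeps across $w_0$; at the crossing $k^\ast$ the ordering~\eqref{eq:exstCN} holds by monotonicity, and the resulting $N$-reset periodic cycle has periodic point $(v_{\mathrm{res}},w_0)\in S_\eps^+$ and contains the canard segment found above, hence is a canard cycle.

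The genuinely delicate point, and where I expect most of the work, is this last intermediate-value step: as $k\uparrow w_0-c_1$ the first intermediate spike time $t_5(k)$ blows up, because then $w_1\uparrow(1+\eps)(v_{\mathrm{res}}+I)$ approaches $S_\eps^+$; one must therefore confine $k$ to a sub-interval on which all spike times stay $\mathcal{O}(1)$ --- exactly where the hypotheses $v_{\mathrm{thr}}=\mathcal{O}(1)\gg v_{\mathrm{res}}$, $I=\mathcal{O}(v_{\mathrm{res}})$ and the $\mathcal{O}(\eps)$-separation from $S_\eps^+$ come in --- and check that $w_N(k)$ still attains $w_0$ there. Everything else follows directly from the closed-form piecewise-linear flows~\eqref{eq:flsp}--\eqref{eq:flsm}.
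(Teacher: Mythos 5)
Your argument is correct, and it rests on the same two pillars as the paper's proof: a one-parameter continuity argument in $k$ that places the periodic point exactly at $(v_{\mathrm{res}},(1+\eps)(v_{\mathrm{res}}+I))$, and the observation that this point lies on the repelling slow manifold $S_\eps^+$, so the cycle necessarily carries a canard segment. The difference is the direction in which the continuity argument runs. The paper starts from the $N$-reset cycle of Proposition~1 with $w_0$ within $\delta$ of $(1+\eps)(v_{\mathrm{res}}+I)$ from above and decreases $k$ until the periodic point slides onto $S_\eps^+$; you instead pin the initial point on $S_\eps^+$ from the outset, shoot forward, and reduce periodicity to the scalar equation $w_N(k)=w_0$ solved by the intermediate value theorem. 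Your version buys two things: it does not require the Proposition-1 cycle to persist over a whole $k$-interval (only the forward orbit of one fixed point need depend continuously on $k$), and your explicit check that the $\mathrm{e}^{t}$ term in $\phi_+$ cancels on $S_\eps^+$ --- so that the manifold is exactly invariant, traversed at speed $\dot v=-\eps(v+I)$, and normally repelling at rate $\mathrm{e}^{t}$ --- substantiates the concluding claim ``hence the cycle is a canard cycle'', which the paper asserts in one sentence with a pointer to the PWL slow-manifold literature. The delicate point you flag (the blow-up of $t_5$ as $w_1\uparrow(1+\eps)(v_{\mathrm{res}}+I)$, forcing the intermediate-value step onto a subinterval of admissible $k$ on which all spike times remain $\mathcal{O}(1)$) is genuine, but it is exactly the bookkeeping the paper already performs inside Proposition~1 via the conditions $t_{i+3}=\mathcal{O}(1)$ and $k>t_{N+3}(v_{\mathrm{res}}+I)\eps+\mathcal{O}(\eps^2)$, so your sketch is no less complete than the published one on this point.
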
 
\begin{proof}
Directly from Proposition 1, it follows that there exists an $N$-reset periodic cycle with periodic point
$(v_0,w_0) = (v_{\mathrm{res}}, w_0)$, where  $w_0 > (1+\eps)(v_{\mathrm{res}} + I)$. Without loss of generality, let us assume that we chose the parameters such that $w_0 - (1+\eps)(v_{\mathrm{res}} + I) < \delta$, with $ 0 < \delta \ll 1$. Then under small decrease of $k$ all the conditions for the existence of $N$-reset cycle are satisfied, and by the continuity we will have that  $w_0 = (1+\eps)(v_{\mathrm{res}} + I)$ for $k < k^*$. 
Since $w_0 = (1+\eps)(v_{\mathrm{res}} + I)\in S_\varepsilon^+$ then a segment of the cycle lies on the 
repelling part of the slow manifold, and hence the $N$-reset cycle is an $N$-reset canard cycle.   
\end{proof}
\begin{proposition}
Consider system~\eqref{eq:vfp}-\eqref{eq:rst} with parameters  $v_{\mathrm{thr}} = {\mathcal{O}}(1) \gg v_{\mathrm{res}}$, $I = {\mathcal{O}}(v_{\mathrm{res}})$, and some $k^*$, all positive. Let us further assume that a canard cycle with $N = 2$ number of resets as described by Proposition 2 exists for a given set of parameter values. Then for some $k$, sufficiently close to but different from $k^*$, there exists a periodic cycle in the system characterised by $N + 1$ of resets, and a periodic point $(v_{\mathrm{res}}, w_0)$ such that the inequality 
\beq
w_0 =  (1+\eps)(v_{\mathrm{res}} + I) = w_{N + 1} > w_{N} > w_{N-1} > w_{N-2} >\ldots > w_1
\label{eq:exstCNp1}
\eeq
holds. Moreover, we have that $|w_{N+1}-w_N| \ll \eps$. Such a periodic cycle is a canard cycle.
\end{proposition}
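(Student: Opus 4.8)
Here is a plan for proving the final statement (Proposition~3).

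\emph{Overall strategy.} The plan is to collapse the planar reset dynamics, restricted to the line $\{v=v_{\mathrm{res}}\}$, to a composition of scalar maps on the $w$-coordinate, and then to solve a single scalar equation in $k$ by the intermediate value theorem; the decisive ingredient will be the logarithmic blow-up of the $\phi_+$-flight time as one approaches the repelling slow manifold $S_\eps^+$. First I would exploit that the canard condition fixes the periodic point at $P_0=(v_{\mathrm{res}},w_0)$ with $w_0=(1+\eps)(v_{\mathrm{res}}+I)\in S_\eps^+$: since the excursion issued from $P_0$ (slow slide down $S_\eps^+$ to the corner, the $f^-$ loop with slow passage along $S_0^-$, the $f^+$ climb, and the reset) involves $k$ only through the terminal reset $R$, formulas~\eqref{eq:flsp}--\eqref{eq:flsm} give that it returns to the reset line at $w_1=c\,w_0+k$ with $c=\mathrm{e}^{-\eps T_1}\in(0,1)$ and $T_1=\mathcal O(1/\eps)$ a fixed number (fixed because $w_0$ is). Next, for $0<w<w_0$, flowing $\phi_+$ from $(v_{\mathrm{res}},w)$ up to $v=v_{\mathrm{thr}}$ and applying $R$ defines the hop map $\Pi_k(w)=g(w)+k$, with $g(w)=\mathrm{e}^{-\eps\sigma(w)}w$ and flight time $\sigma(w)=\ln\!\big[(1+\eps)(v_{\mathrm{thr}}+I)/(w_0-w)\big]+o(1)$. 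Hence $g$ is continuous on $(0,w_0)$, unimodal with $\max g\approx v_{\mathrm{res}}+I$, $g(w)<w$, $g(w)\to0$ as $w\uparrow w_0$, and, crucially, $g(w)=w_0\big[(w_0-w)/((1+\eps)(v_{\mathrm{thr}}+I))\big]^{\eps}\big(1+o(1)\big)$ as $w\uparrow w_0$. A $j$-reset canard cycle with periodic point $P_0$ is then exactly a solution of $\Pi_k^{\,j-1}(c\,w_0+k)=w_0$ obeying the ordering $w_0>w_{j-1}>\dots>w_1$.

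\emph{The scalar equation and the bracket.} With this reduction, the $N=2$ hypothesis reads: at $k=k^*$ one has $w_1^*=c\,w_0+k^*<w_0$ and $g(w_1^*)+k^*=w_0$, so $g(w_1^*)=w_0-k^*\le\max g$, giving $k^*\gtrsim\eps w_0$ and $\theta:=(w_0-k^*)/w_0\in(0,1)$. The sought $3$-reset canard requires $H(k):=\Pi_k\big(\Pi_k(c\,w_0+k)\big)=w_0$; writing $w_2(k)=g(c\,w_0+k)+k$, $H(k)=g(w_2(k))+k$, the $2$-reset relation forces $w_2(k^*)=w_0$, hence $H(k^*)=g(w_0)+k^*=k^*<w_0$. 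I would then close the bracket: assuming $w_1^*$ lies on the increasing branch of $g$ (the other case being symmetric, with $k$ on the opposite side of $k^*$), $\tfrac{\mathrm d}{\mathrm dk}w_2|_{k^*}=c\,g'(w_1^*)+1=:\beta>0$, so $w_2(k^*-\eta)=w_0-\beta\eta+o(\eta)\in(v_{\mathrm{res}}+I,w_0)$ for small $\eta>0$. Using the near-$w_0$ asymptotics of $g$, $H(k^*-\eta)-w_0=w_0\big[\beta\eta/((1+\eps)(v_{\mathrm{thr}}+I))\big]^{\eps}(1+o(1))-(w_0-k^*)+\mathcal O(\eta)$, which is negative at $\eta=0$ and becomes positive once $\big[\beta\eta/((1+\eps)(v_{\mathrm{thr}}+I))\big]^{\eps}>\theta$, i.e. for $\eta$ exceeding $\eta_0:=\tfrac{(1+\eps)(v_{\mathrm{thr}}+I)}{\beta}\,\theta^{1/\eps}$, which is exponentially small in $1/\eps$ since $\theta\in(0,1)$. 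The intermediate value theorem then produces $k^\dagger$ with $H(k^\dagger)=w_0$ and $|k^\dagger-k^*|=\mathcal O(\eta_0)$, so $k^\dagger\neq k^*$ and $k^\dagger$ is exponentially close to $k^*$. At $k=k^\dagger$ the orbit through $P_0$ closes after exactly three resets, at $w_1=c\,w_0+k^\dagger\,(\approx w_1^*<w_0)$, $w_2=w_2(k^\dagger)=w_0-\beta\eta_0+o(\eta_0)$ and $w_3=w_0$; thus $w_0=w_3>w_2>w_1$, which is~\eqref{eq:exstCNp1}, and $|w_{N+1}-w_N|=w_0-w_2=\mathcal O(\eta_0)\ll\eps$. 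Since $w_0\in S_\eps^+$, the slide of the cycle from $P_0$ down $S_\eps^+$ — and, because $w_2$ is exponentially close to $w_0$, also the passage leaving $(v_{\mathrm{res}},w_2)$, which shadows $S_\eps^+$ for a time $\mathcal O(1/\eps)$ before reaching $v_{\mathrm{thr}}$ — lies on the repelling slow manifold, so the cycle is a canard cycle.

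\emph{Main obstacle.} The hard part is the quantitative control of $\Pi_k$ near $S_\eps^+$, i.e. establishing $g(w)\sim w_0\big[(w_0-w)/((1+\eps)(v_{\mathrm{thr}}+I))\big]^{\eps}$ as $w\uparrow w_0$. This slow-passage amplification — an exponentially small displacement of $w_2(k)$ below $w_0$ producing an $\mathcal O(1)$ change in $g(w_2(k))$ — is precisely what makes the intermediate-value bracket close with $k^\dagger$ exponentially close to $k^*$, and simultaneously forces $|w_3-w_2|\ll\eps$. Making it rigorous requires tracking the $\phi_+$-trajectory through the $\mathcal O(\eps)$-wide strip between $S_0^+$ and $S_\eps^+$ and verifying that, over the relevant range of initial $w$, it escapes rightward to $v_{\mathrm{thr}}$ rather than falling back across $\{v=0\}$ — which would turn the intended hop into a full excursion and spoil the reset count.
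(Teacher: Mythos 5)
Your proposal is correct and, at its core, follows the same route as the paper: reduce the dynamics to scalar data on the reset line $\{v=v_{\mathrm{res}}\}$ via the explicit flows~\eqref{eq:flsp}--\eqref{eq:flsm}, pin the periodic point at $w_0=(1+\eps)(v_{\mathrm{res}}+I)\in S_\eps^+$, and extract the third reset from the logarithmic divergence of the $\phi_+$ flight time as the second post-reset value $w_2$ approaches $S_\eps^+$ from below. The difference lies in how the final existence step is closed. The paper writes the candidate $k$ in closed form (equation~\eqref{eq:valofk}), poses the transcendental equation for the extra flight time $t_6(k)$, and argues self-consistency: for $\delta(k)=|w_3-w_2|\ll\eps$ one can arrange $\delta(k)\,\mathrm{e}^{t_6}=\mathcal{O}(1)$ with $t_6=\mathcal{O}(1/\eps)$. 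You instead package the hop as a map $\Pi_k(w)=g(w)+k$, derive the asymptotics $g(w)\sim w_0\bigl[(w_0-w)/((1+\eps)(v_{\mathrm{thr}}+I))\bigr]^{\eps}$ (which is precisely the paper's $\delta\,\mathrm{e}^{t_6}$ balance, read off directly from~\eqref{eq:flsp}), and close existence by an intermediate-value bracket on $H(k)=w_0$, obtaining the explicit exponentially small window $|k-k^*|=\mathcal{O}(\theta^{1/\eps})$ with $\theta=(w_0-k^*)/w_0$. What your version buys is a genuine existence argument where the paper's ``let us suppose that such a $k$ exists'' step is only a consistency check, together with a quantitative bound on $|k^\dagger-k^*|$ and on $|w_3-w_2|$; what it costs is the need to justify continuity and strict monotonicity of $w_2(k)$ near $k^*$ and the sign of $g'(w_1^*)$ — which you resolve by treating the two branches symmetrically, consistent with the paper's admission that the sign of $k^*-k$ cannot be determined. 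The one step you should make fully explicit (and correctly identify as the main obstacle) is that $w_2(k)$ remains strictly below $w_0$ throughout the bracket, so that the second hop genuinely escapes rightward to $v_{\mathrm{thr}}$ rather than crossing above $S_\eps^+$ and falling back to $\{v=0\}$, which would change the reset count.
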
 

\begin{proof}
If there exists a canard cycle with two resets as described in Proposition 2, then we have 
\begin{equation}
w_0^* = (1 + \eps)(v_{\mathrm{res}} + I) = k^*\frac{(1 + \mbox{e}^{-\eps t_5})}{1 - \mbox{e}^{-\eps(t_1 + \ldots + t_5)}} > 
k^*\frac{(1 + \mbox{e}^{-\eps t_5})\mbox{e}^{-\eps(t_1+\ldots+t_4)}}{1 - \mbox{e}^{-\eps(t_1 + \ldots + t_5)}}.
\label{eq:Ccc}
\end{equation} 
Moreover, 
$$
w_{II} = w_0^*\mbox{e}^{-\eps(t_1 + t_2 + t_3)},
$$
where point $w_{II}$ is the point of crossing $\{v = 0\}$ by flow $\phi_-$. Times $t_1$, $t_2$ and $t_3$ are the  flow times from initial point $(v_{\mathrm{res}},\,(1+\eps)(v_{\mathrm{res}} + I))$ until the point of crossing $\{v = 0\}$ by flow $\phi_-$ (the times are defined in the proof of Proposition 1). We know that $t_1 + t_2 + t_3 = \mathcal{O}(1/\eps)$. Note that $k^* = \mathcal{O}(v_{\mathrm{res}})$ for equation~\eqref{eq:Ccc} to hold. 
From~\eqref{eq:Ccc}, we further have that 
\beq
k^* = \frac{(1 + \eps)(v_{\mathrm{res}} + I)(1 - \mbox{e}^{-\eps(t_1 + \ldots + t_5)})}{(1 + \mbox{e}^{-\eps t_5})}.
\eeq
Consider now some some $k \not = k^*$ sufficiently close to $k^*$, and such that we have 
$$
w_3 = w_0^* = (1+ \eps)(v_{\mathrm{res}} + I) > w_2 = (1+ \eps)(v_{\mathrm{res}} + I) - \delta(k) = 
w_{II}\mbox{e}^{-\eps (t_4^\prime + t_5^\prime)} + k\mbox{e}^{-\eps t_5^\prime} + k,
$$ 
where $w_1 = w_{II}\mbox{e}^{-\eps (t_4^\prime)} + k$ and $w_2 = w_{II}\mbox{e}^{-\eps (t_4^\prime + t_5^\prime)} + k\mbox{e}^{-\eps t_5^\prime} + k$. 

Since $k$ is sufficiently close to $k^*$, $t_4^\prime = \mathcal{O}(t_4)$ and $t_5^\prime = \mathcal{O}(t_5)$, and then $w_2 > w_1$. We now seek to find some time $t_6(k) > 0$ after which point $(v_{\mathrm{res}},w_2)$ under flow $\phi_+$, and a reset, is mapped onto $w_0 = (1+ \eps)(v_{\mathrm{res}} + I)$ for a suitable $k \not = k^*$ and sufficiently close to $k^*$. Let us suppose that such a $k$ exists. Then, it must be equal to 
\beq
k = \frac{(1 + \eps)(v_{\mathrm{res}} + I)(1 - \mbox{e}^{-\eps(t_1 + \ldots + t_4^\prime + t_5^\prime + t_6)})}{(1 + \mbox{e}^{-\eps t_5^\prime} +  \mbox{e}^{-\eps (t_5^\prime + t_6)})}
\label{eq:valofk}
\eeq
and $t_6(k)$ is a solution of 
$$
1 = -I + (v_{\mathrm{res}}+I)\mbox{e}^{-\eps t_6} - \frac{\delta(k)}{1+\eps}\mbox{e}^{-\eps t_6} + 
 \frac{\delta(k)\mbox{e}^{t_6}}{1+\eps}.
$$
For $\delta(k)$ sufficiently small, that is for $\delta(k)  = |w_3-w_2| < \eps(v_{\mathrm{res}}+I) \ll \eps$ there exists the solution $t_6(k)$ such that 
$$
\mathcal{O}(\delta(k)\mbox{e}^{t_6}) = \mathcal{O}(1).
$$ 
On the time interval $[0,t_6]$, the velocity of the $v$-component is equal to $\dot v = -\eps(v + I) + \delta(k)(1+\eps)$, thus for $\delta(k)$ sufficiently small and hence for $k$ appropriately chosen, the average value of the velocity may be such $t_6 = \mathcal{O}(v_{\mathrm{res}}/\eps)$ and $\mathcal{O}(\delta(k)\mbox{e}^{t_6}) = \mathcal{O}(1)$. This ensures the existence of a canard cycle with $3$-resets.  
\end{proof}
Note that we are not able to determine the sign of $k^*-k$, that is, whether it is by increasing or decreasing the bifurcation parameter that we will find the $3$-reset canard cycle. Moreover, we conjecture that if there exists a canard cycle with $N > 2$ number or resets, as described in Proposition 2, then in some sufficiently small neighbourhood of the bifurcation parameter, there exist canard cycles with $N+1, N+2,\cdots, 2N$ resets in system~\eqref{eq:vfp}-\eqref{eq:rst}. 
This will be further explained in the following section.     
\subsubsection{Transition from 2-reset to 3-reset cycles}
We now explain the 2- to 3-reset transition of the periodic cycle observed in our model example. 
We shall start by considering a 2-reset periodic cycle existing for some $k^*$, all other parameters being fixed. Thus, we first assume that Proposition 1 holds for $N = 2$.

We will now construct a return map from a segment on $\{v = v_{\mathrm{res}}\}$ back to itself under the action of flows $\phi_\pm$ and the reset map. We have a 2-reset periodic cycle with periodic point, say $w_0^*$ on $\{v = v_{\mathrm{res}}\}$. We choose $k^*$ and all other system parameters such that we have periodic point $w_0^*$ some distance $\zeta = \zeta(k)$ away from the point of intersection of the slow manifold $S_\eps^+$ with $\{v = v_{\mathrm{res}}\}$. That is $w_0^* - \zeta = (1+\eps)(v_{\mathrm{res}} + I)$. Moreover, we choose $k^*$ such that the 2-reset limit cycle is stable. By considering explicit flow solutions~\eqref{eq:flsp} and~\eqref{eq:flsm}, which allow us to determine the variation matrices for the flows, and discontinuity maps, one can show that the nontrivial Floquet multiplier of the 2-reset cycle is mainly determined by the exponential terms multiplying the initial point $w_0^*$, that is the expansion is dominated by the term $\exp(t_e)$ (where $t_e$ is the total time of evolution following flow $\phi_+$) and contraction by the term $\exp(-t_c)$ (where $t_c$ is the time of evolution following flow $\phi_-$). Hence, for $k$ appropriately chosen, we will have  $t_c > t_e$, from which we can ensure local contraction and so the 2-reset cycle is an attractor. %
The effect of the discontinuity matrices on the stability may be considered negligible.
   
Consider now a local return map, which maps an interval containing $w_0^*$, say $I$, back to itself, with $I$ corresponding to a segment of Poincar\'e section
$\Pi = \{ v = v_{\mathrm{res}}, \, w = (w_0^*-\xi, w_0^*+\xi) \}$, where $\zeta > \xi > 0$ and $\xi$ is  sufficiently small.  
The return map, say $P_c$, such that $P_c:\Pi \mapsto \Pi$ is defined as 
\begin{equation}\label{eq:RTMap}
\begin{aligned}
P_c &:\; (v_{\mathrm{res}},w_n)\mapsto (v_{\mathrm{res}},w_{n+1}),\;\text{with}\\
w_{n+1} &= R(P(\phi_+(R(P(\phi_+(P(\phi_-(P(\phi_+((v_{\mathrm{res}},w_n),\cdot)),\cdot)),\cdot))),\cdot))),
\end{aligned}
\end{equation}
where $n\in\mathbb{N}$ and the dot symbol corresponds to an appropriate time of evolution which brings the flow solution of the 2-reset cycle to either $\{v = 0\}$ or $\{v = v_{\mathrm{res}}\}$. The projection mapping $P$ is a standard projection (along the flow lines generated by vector fields $f_{\pm}$) onto switching set $\{v = 0\}$, or threshold set $\{v = v_{\mathrm{res}}\}$, which is non-identity for points in $I$ different from $w_0^*$. Finally, $R$ denotes the reset map~\eqref{eq:reset}.

Since we choose $k^*$ such that the 2-reset limit cycle is stable, it then follows that for $\xi$ sufficiently small $P_c$ is a contraction for all points in $I$. Moreover, it may be further verified that $P_c$ is an injection with reversed orientation. Decreasing $k$ implies that $t_c$ becomes shorter and $t_e$ becomes longer since, by continuity, the periodic point (on the Poincar\'e section $\Pi$) moves towards $(1+\eps)(v_{\mathrm{res}}+I)$ as a function of $k$. This implies that the 2-reset cycle has to undergo a period-doubling bifurcation for some $k<k^*$, and the 2-reset cycle becomes unstable by further decrease in $k$. Further decrease in $k$ implies that the 4-reset cycle born in the period-doubling bifurcation will become a 4-reset maximal canard cycle, and it is either this periodic cycle, or a chaotic attractor which may be born in a period-doubling cascade (on an extremely small parameter variation), which will collide with the 3-reset maximal canard cycle that exits in some neighbourhood of the 2-reset maximal canard cycle as described in Proposition 3. 
By continuity (see Proposition 2), the 2-reset canard cycle becomes a 2-reset maximal canard cycle under variation of $k$, which is extremely unstable due to its segment along the repelling slow manifold $S_\eps^+$.

Starting from the 3-reset maximal canard cycle, again by continuity with respect to $k$, one may expect to see a branch of 3-reset cycles within an open set of parameter values. Symbolically, this scenario could be denoted as $2\leftrightarrow(4)(\cdot)\leftrightarrow 3$, which means that the continuous transition from the 2-reset to the 3-reset cycle has to occur via a 4-reset cycle possibly followed by a chaotic attractor that we represent by $(\cdot)$. The exact scenario depends on the dynamics of a local map around a periodic point of the 3-reset maximal canard cycle.

Let $w^*_{3r}$ denote the periodic point of the 3-reset maximal canard cycle. It is the periodic point on the maximal 3-reset canard cycle on $\{v = v_{\mathrm{res}}\}$, and it is given by
\beq
w^*_{3r} = \phi_+(R(\phi_+(R(\phi_+(P(\phi_-((v_{\mathrm{res}},(1+\eps)I),t_2)),t_3)),t_4)),t_5),
\label{eq:3rc}
\eeq  
where $t_2$ is the flow time back to the set $\{v = 0 \}$, and the subsequent times $t_3$, $t_4$ and $t_5$, are flow times to the set $\{v = v_{\mathrm{thr}}\}$ along the 3-reset maximal canard cycle. Finally, $R$ denotes the reset map~\eqref{eq:reset}.

We now define a segment $J$, which will serve us as a local Poincar\'e section such that $J = (w^*_{3r}-\eta,\,w^*_{3r}+\eta)$, with $\eta$ sufficiently small. A local return map $P_{3RC}$, where $P_{3RC}:J\to J$, $w_n\mapsto w_{n+1}$, is then given by
\begin{equation}\label{eq:3rcmap}
\begin{aligned}
w_{n+1} &= R(P(\phi_+(P(\phi_-(P(\phi_+(R(P(\phi_+((v_{\mathrm{res}},w_n),t_0))),t_1)),t_2)),t_3)))\\
 & ...\;\text{for}\;w_n \geq w^*_{3r},\\
w_{n+1} &= R(P(\phi_+(R(P(\phi_+((v_{\mathrm{res}},w_n),t_4))),t_5)))\\
 & ...\;\text{for}\;w_n < w^*_{3r}.
\end{aligned}
\end{equation}

The map $P_{3RC}$ is a continuous, piecewise-linear injection with reversed orientation to leading order in $w_n$. The dynamics of piecewise-linear continuous maps has been extensively studied, among others, in e.g.~\cite{avrutin2011,avrutin2006,avrutin2013,banerjee2000,schenke2011}.  
Note that additional flow time flows $t_0$ and $t_1$ are solution times along the 3-reset maximal canard cycle from point $(v_{\mathrm{res}},\,w^*_{3r})$ to $v= v_{\mathrm{thr}}$ and from $(v_{\mathrm{res}},(1+\eps)(v_{\mathrm{res}}+I))$ to $(0,\,(1+\eps)I)$, respectively. The other times are the same as in~\eqref{eq:3rc}. Finally, $R$ is, as before, the reset map~\eqref{eq:reset}, and $P$ is a standard projection map along the flow lines. 
 
The above scenario will also occur for reset cycles with a higher number of resets. Note, though, that to prove such cases an additional care must be taken, when considering the continuity argument for the existence of periodic points with respect to parameter variation, and the existence of local Poincar\'e maps for open intervals. Moreover, a transition, say from a $3$- to $4$-reset cycles will occur via a continuous transition through a 6- and possibly a 5-reset cycle (and maybe a chaotic attractor), or a 6-reset cycle (and then a chaotic attractor). In the notations previously introduced, this would be described as $3\leftrightarrow(6)(\cdot) \leftrightarrow 4$. 

\section{Numerical bifurcation approach}
\label{sec:nbif}
Canard solutions will occur in model system (\ref{eq:modelgen})-(\ref{eq:reset}) with $F(v,w) = -w$ within certain intervals of paremeter values. As we explained in Sec. \ref{sec:cit}, transitons between these solutions organise the spike-adding scenario, and these transitions are accompanied by exponentially-small phenomena that are difficult to capture numerically. We therefore use a boundary-value approach in combination with pseudo-arclength continuation, in order to compute branches of limit cycles and/or parametrised families of an orbit segments.

To this aim, we use the software package \textsc{auto}, which \textit{a priori} cannot deal with nonsmooth systems. However, one can define a higher-dimensional problem with carefully-chosen boundary conditions that effectively will allow to compute families of limit cycles of the system of interest. Our strategy to achieve such computations can be described as follows. First,  we enlarge the problem so that every segment in a linearity zone, and every segment in between resets, becomes solution to one copy of the original system. Second, we define boundary conditions to ensure the match-up between all such orbit segments by accounting for crossings of the switching manifold $\{v=0\}$ as well as for threshold crossings and associated resets. Finally,  we verify the well-posedness of the numerical problem and perform arclength continuation. 

We will now present the numerical results obtained via the above procedure. Namely, we could compute explosive branches of limit cycles of the original problem along spike-adding transitions. There are of course a number of limitations to this approach and we wish to highlight two of them.\newline The first limitation is that it is hard to detect stability and bifurcations without having to implement oneself additional test-functions, for which one would have to substantially complicate the numerical setup, and possibly even edit the \textsc{auto} code. This defeats the purpose of the numerical procedure we introduce here, which is to efficiently compute, within an existing software, branches of limit cycles in a hybrid system like~\eqref{eq:modelgen}-\eqref{eq:reset} undergoing canard-mediated spike-adding transitions, which are quite challenging to find by direct simulations. In order to perform our computations, we need to preprocess the initial periodic solution obtained by direct simulation and write down a well-posed two-point boundary-value problem outlined below. This simplicity is a great advantage of our approach.\newline The second limitation and drawback of our numerical approach with \textsc{auto} is that the two-point boundary-value problem (BVP) that we setup can give spurious solutions. However, we can avoid these by stopping the continuation as soon as the integration time of one segment, forming the overall cycle, becomes negative. Future work may include implementations of a numerical method that would avoid these spurious solutions as well as detect bifurcations and transitions discussed in current work.

Our strategy to compute branches of $N$-reset limit cycles in system~\eqref{eq:modelgen}-\eqref{eq:reset} is therefore to use a boundary-value problem approach in tandem with numerical continuation. Of course, the system is piecewise linear and so one can obtain a lot of information analytically about the families of periodic solutions, including the existence of cycles containing reset-induced canard segments. However, the associated branches of solutions are ``explosive'' in parameter space, and so it may be tedious if not outright impossible to compute such families using direct simulation. In short, a boundary-value approach discussed here is better suited to such a task. 

Given that standard continuation packages like \textsc{auto} do not a priori handle nonsmooth dynamical systems, even more so systems with resets as we mentioned earlier, we need to decompose a typical bursting cycle with $N$ resets into $N+2$ segments along which the system is linear, together with appropriate boundary conditions. There are $N-1$ segments which correspond to parts of the trajectory where $v$ lies between $v_{\mathrm{res}}$ and $v_{\mathrm{thr}}$, and one segment where $v$ lies between $0$ and $v_{\mathrm{thr}}$. There are two additional segments which correspond to what will always be the initial part of the trajectory in our computations, namely a segment from the reset line $\{v=v_{\mathrm{res}}\}$ until the switching line $\{v = 0\}$ where the trajectory is repelled away from the right branch of the critical manifold, and a second segment from the switching line back to itself, where the trajectory approaches the attracting branch of the critical manifold, follows it until the corner where it switched back to the right linearity zone, and there is enters the next flow segment, see Figure~\ref{fig:burst} for an illustration on a 4-reset cycle. There are two types of boundary conditions here:
\begin{figure}[!t]
\centering
\includegraphics[width=0.5\textwidth]{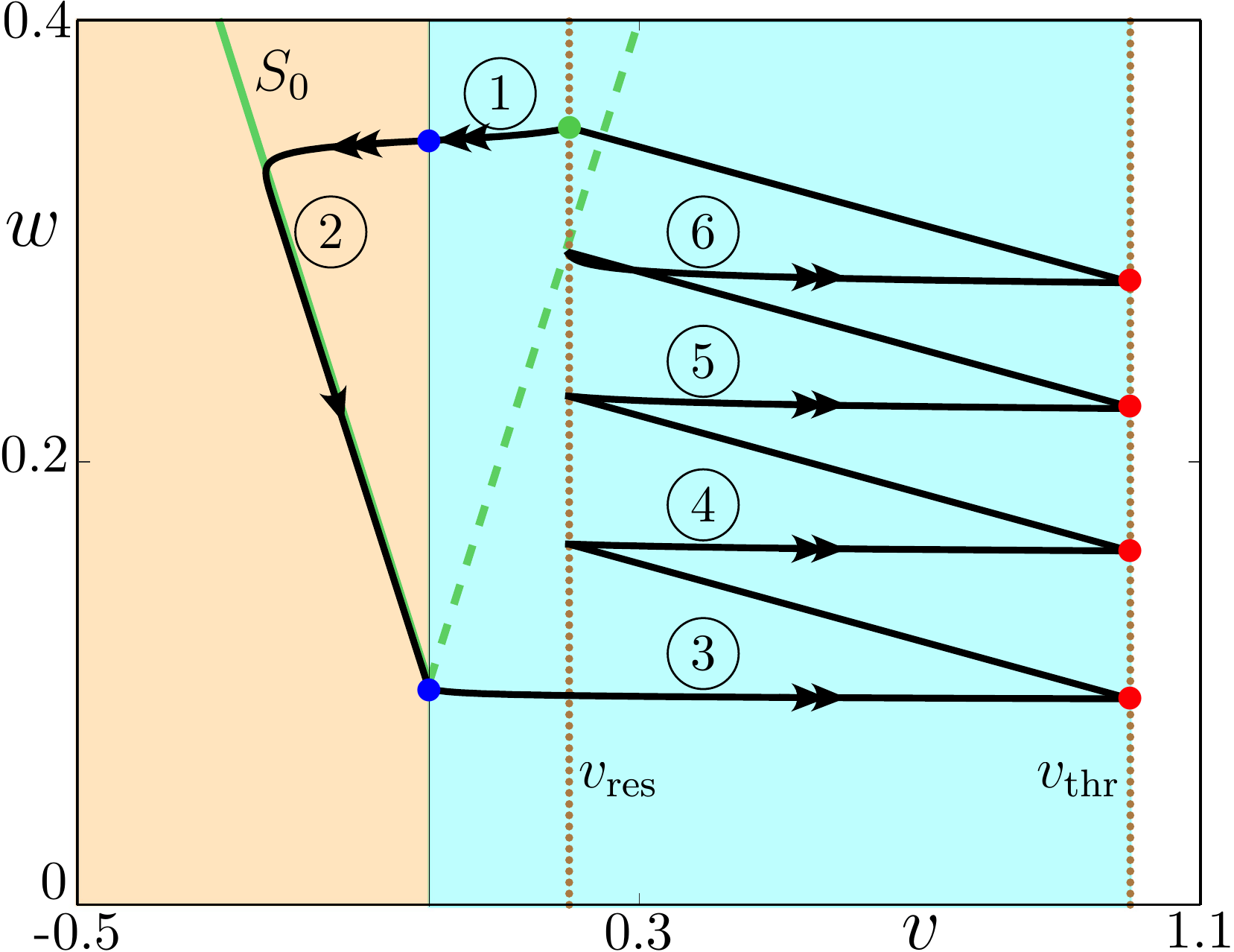}
\caption{Typical bursting cycles of system~(\ref{eq:modelgen})-(\ref{eq:reset}) where $F(v,w) = -w$ and with $N=4$ resets. The cycle can be decomposed into $N+2=6$ segments which are connected to each other via either a boundary or reset conditions. Blue dots correspond to boundary points where continuity conditions must be applied and red dots correspond to boundary points where reset conditions must be applied. Finally, the green dot corresponds to the initial condition on $\{v=v_{\mathrm{res}}\}$.}
\label{fig:burst}
\end{figure}
\begin{itemize}
\item[a.] continuity conditions across the switching line $\{v=0\}$,
\item[b.] reset rules.
\end{itemize}
  A given $N$-reset bursting cycle corresponds to a multi-point boundary-value problem, which is not directly implementable in \textsc{auto}. For this reason, we  transform the original planar system into a $2(N+2)$-dimensional system which corresponds to $N+3$ identical copies of the original system~\eqref{eq:modelgen}-\eqref{eq:reset}. The nonsmoothness across the switching line and the reset rules are implemented as boundary conditions. Therefore, for the numerical problem associated with branches of limit cycles of system~\eqref{eq:modelgen}--\eqref{eq:reset}, we consider the following transformed system:
\begin{equation}\label{eq:nummodel}
\begin{split}
v_i' &= |v_i|-w_i+I\\
w_i' &=\eps(b-w_i),
\end{split}
\end{equation}  
for $i=1\ldots N+3$. Every orbit segment of the original problem along a cycle like the 4-reset bursting cycle shown in Figure~\ref{fig:burst} corresponds to one segment solution of one planar component of the extended system~\eqref{eq:nummodel}. The $N+2$ copies of the original system forming the extended one can be put in any given order, but, for simplicity, we order them according to the order in which each corresponding segment is encountered while moving with the flow (here, counter-clockwise) along the bursting cycle. Consequently, every such segment \circled{i} is a solution to subsystem $(v_i,w_i)$ of the extended system~\eqref{eq:nummodel}. It has an integration time $T_i$ associated with it, and such that: $\sum_i T_i = T$ is the period of the bursting cycle under consideration assuming that the resets are instantaneous. Note that, as is customary in numerical continuation, one rescales time by the integration time of the initial guess of the two-point boundary problem (periodic of not) which allows one to make the integration time an explicit parameter that can be solved for at every computational step. It then follows that, in the rescaled problem, the time vector runs from $0$ to $1$.

 We may now perform a continuation of a two-point boundary value problem of the $2(N+2)$-dimensional system~\eqref{eq:nummodel}, with a number of boundary conditions and a number of free parameters which must be chosen so as to satisfy the well-posedness equation: 
\begin{equation}\label{eq:wellposed}
\mathrm{NBC}-\mathrm{NDIM}+1=\mathrm{NCIP},
\end{equation}
where $\mathrm{NBC}$ corresponds to the total number of boundary conditions, $\mathrm{NDIM}$ is the dimension of the problem and $\mathrm{NICP}$ is the number of free (continuation) parameters that we can vary. It turns out that one can make this numerical problem well-posed and, hence, solvable by \textsc{auto}, by simply freeing all integration times $T_i$, $i=1\ldots N+2$ and one relevant systems parameter, which in our computations will be $k$. That is, we will free $N+3$ parameters in order to compute a one-parameter family of cycles of the original system. This is consistent with the idea that we wish to compute a branch of limit cycles of the original system~\eqref{eq:modelgen}--\eqref{eq:reset} which, in the smooth context, would require to free one system parameter as well as the period of oscillations $T$. In the present case, as $T$ is the sum of all $T_i$, it is natural to free all of them.

\begin{figure}[!t]
\centering
\includegraphics[width=\textwidth]{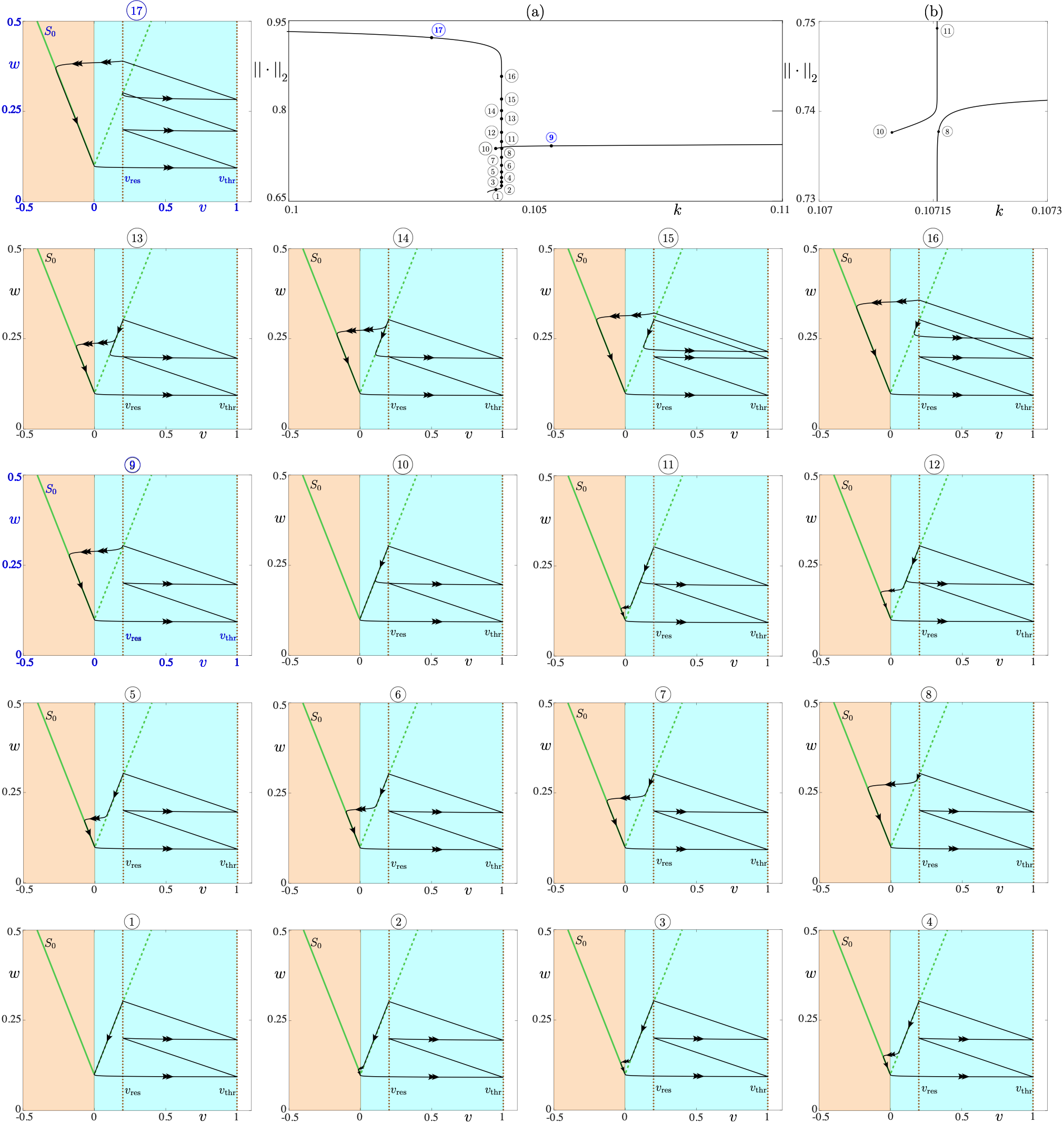}
\caption{Transition between 2-reset cycles and 3-reset cycles via canard solutions. Multiple branches are shown, they have been computed by solving parametrised families of the two-point boundary-value problem~\eqref{eq:bcnd}-\eqref{eq:parbcnd}. Panels (9) and (17) have been highlighted as they correspond to stable 3-reset and 2-reset cycles, respectively. Finally, we remark that the transition between families of 2-reset and 3-reset cycles is discontinuous, as is illustrated in panels (a) and (b).}
\label{fig:trans23}
\end{figure}
Now, the well-posedness equation~\eqref{eq:wellposed} implies that the total number of boundary conditions must be equal to $3(N+2)$. This number is consistent with the fact that the extended system is nothing else than $N+2$ identical copies of the original planar system. To continue an orbit segment, periodic or not, of a planar system in one system parameter with free integration time and one needs three boundary conditions. Hence the number $3(N+2)$ conditions for $N+2$ independent planar systems in which one frees the integration time for each planar independent component and one system parameter shared by all components, e.g. $k$. In the case of a periodic continuation problem in a planar system as ours, to compute a branch of limit cycles one indeed needs to free one system parameter as well as the period, as we mentioned. Then  one needs three conditions: two periodic boundary conditions and one phase condition. Therefore the overall number of conditions that we obtain is consistent. 
These then correspond to three conditions per planar component $(v_i,w_i)$ of the extended system, which we can formulate as follows:

\begin{equation}\label{eq:bcnd}
\begin{split}
v_i(0) &= \alpha_i,\\
w_i(0) &= w_{i-1}(1)+k,\\
v_i(1) &= \beta_i,
\end{split}
\end{equation}
for $i=1\ldots N+2$ with: 
\begin{equation}\label{eq:parbcnd}
\begin{split}
\alpha_2 &=\alpha_3=0,\\
\alpha_i  &=v_{\mathrm{res}},\;i\notin\{2,3\},\\
\beta_1  &=\beta_2=0,\\
\beta_i  &=v_{\mathrm{thr}},\;i\notin\{1,2\},
\end{split}
\end{equation}
and with the convention that $w_0=w_{N+2}$.

This formulation allows us to compute families of solution of the initial system~\eqref{eq:modelgen}-\eqref{eq:reset} with a fixed number of resets through canard transitions. In Figure~\ref{fig:trans23} we show the results of such computations for branches of 2-reset cycles and 3-reset cycles, respectively. The branches do not connect. However they come exponentially close to each other as part of their respective canard regimes. Note that one needs stopping conditions when solving the numerical problem~\eqref{eq:bcnd}-\eqref{eq:parbcnd}, which otherwise can admit spurious solutions. For system~\eqref{eq:modelgen}-\eqref{eq:reset}, the only conditions that need to be implemented which ensure this are: $T_i>0$ at all steps and for $i=1\ldots N+2$ for branches of $N$-reset cycles.

\section{AIF model with different adaptation dynamics} 
\label{sec:cplm}
We now consider a modified system where the slow equation depends on the fast variable $v$, and so the slow and fast varibales are no longer decoupled. The fast-time formulation is given by:
\begin{equation}\label{eq:model2}
\begin{split}
v' &= |v|-w+I\\
w' &=\eps(v - b),
\end{split}
\end{equation}  
and the slow-time formulation reads
\begin{equation}\label{eq:modelbis2}
\begin{split}
\eps \dot v &= |v|-w+I\\
\dot w &= v - b,
\end{split}
\end{equation}
\begin{figure}[!t]
\centering
\includegraphics[width=0.8\textwidth]{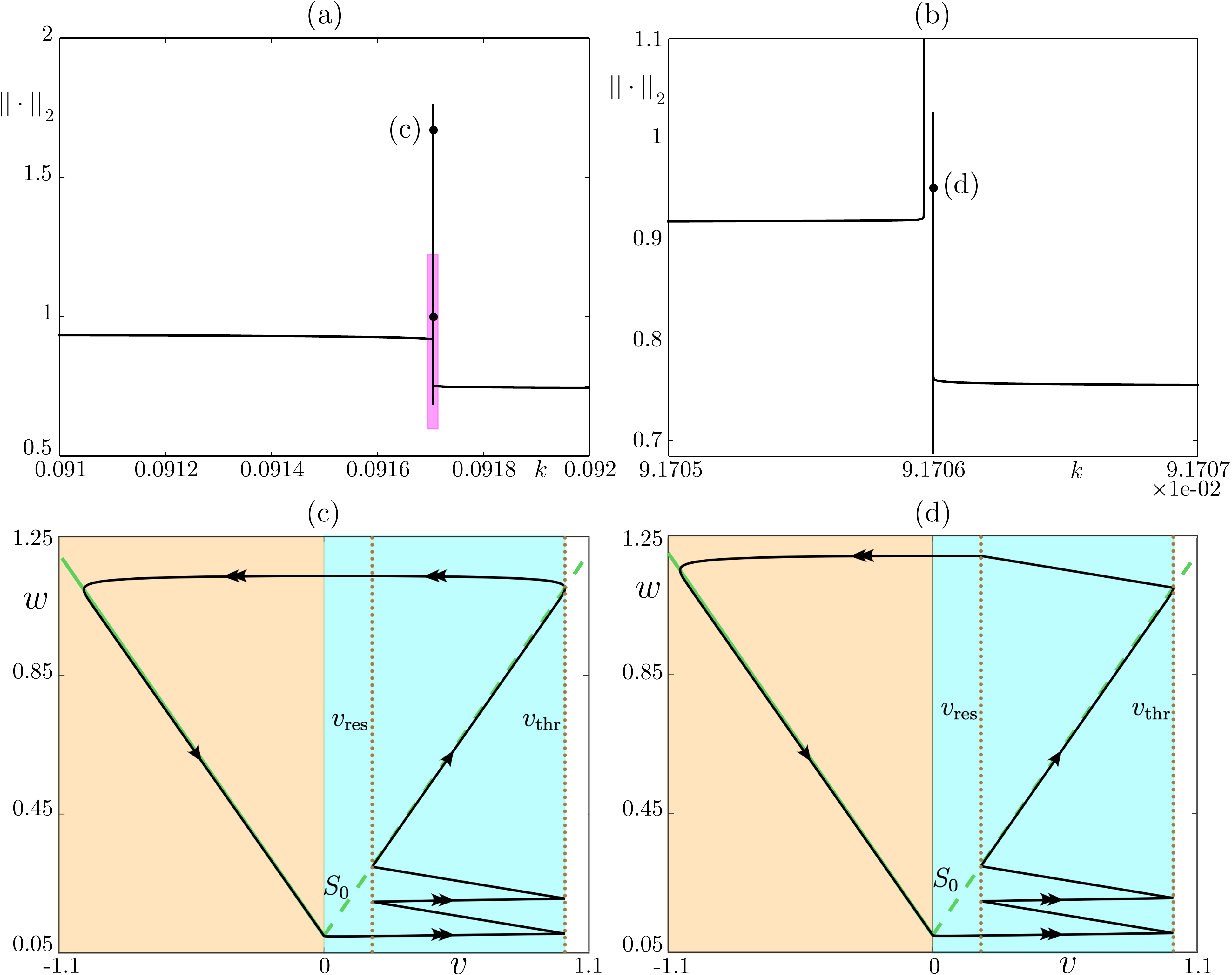}
\caption{Two types of maximal canards in system~\eqref{eq:model2} plotted in the phase plane (lower panels), together with the associated solution branches (upper panels), at the transition between families of 2- and 3-reset cycles. Panels (a) and (c): the maximal canard is the upper limit (in terms of the L$_2$-norm) of the family of headless canard cycles with 2 resets. Panels (b) and (d): the maximal canard is the upper limit of the family of canard cycles with head and with 3 resets (the head corresponding to the third reset).}
\label{fig:maxcan}
\end{figure}
to which we have the same reset conditions as before, namely~\eqref{eq:reset}.
System~\eqref{eq:model2}--\eqref{eq:modelbis2} has the same critical manifold as system~\eqref{eq:modelgen} with $F(v,w) = -w$, the slow nullcline being perpendicular to fast fibers.  

System~\eqref{eq:modelbis2} has a geometry more akin to a van der Pol or a FitzHugh-Nagumo model in the smooth slow-fast context. Together with the reset condition~\eqref{eq:reset}, the dynamics of system~\eqref{eq:modelbis2} is comparable to that of a square-wave burster like the Hindmarsh-Rose (HR) model in the canard-mediated spike-adding regime studied in, e.g.,~\cite{desroches2013}. In particular, the slow flow points upwards along the critical manifold, and this is why the spike-adding process will be more comparable to what was described in~\cite{desroches2013}. Amongst other features, within a one-parameter transition, varying e.g. parameter $k$ as before, one will find family of reset-induced cycles with canard segments. However, due to the particular setup of such systems with reset, there are similarities and differences with the smooth case, as illustrated in Figure~\ref{fig:maxcan}. Panel (a) shows what looks like a continuous branch of limit cycles undergoing an explosive transition. However, when zooming in (panel (b)), one realises that these are in fact two branches with explosive segments that are exponentially close to each other. On the left, a branch of two-reset cycles undergoing a canard explosion whereby, for specific $k$-values within an exponentially-small interval, the dynamics after the second reset follows upwards the slow manifold. Such cycles have an increasingly longer canard segment, up to the maximal canard, which is well-defined as a grazing bifurcation point in this nonsmooth context. On the right, a family of cycles with three-reset each. 

Along the explosive part of the branch, the cycles follow the slow manifold after the second reset, but ``from the other side'' compared to the branch corresponding to the maximal canard cycle in panel (c). In particular, this family of reset-induced canard cycles follow the slow manifold and then escape its vicinity towards the right, hence hitting the threshold line again and having a third reset. Hence we find another maximal canard cycle (panel (d)) which grazes with the threshold line from the other side of the slow manifold than the one shown in panel (c). We conjecture that these different maximal canard cycles, exponentially close in parameter space but well separated in the L$_2$-norm projection, will converge towards each other as $k$ tends to $0$. This is a difference with the smooth case, due to the particular setup, nevertheless the spike-adding process is akin to what is observed in smooth slow-fast systems.

\section{Comparison with smooth square-wave bursters}
\label{sec:snsmc}
%
\begin{figure}[!t]
\centering
\includegraphics[width=0.8\textwidth]{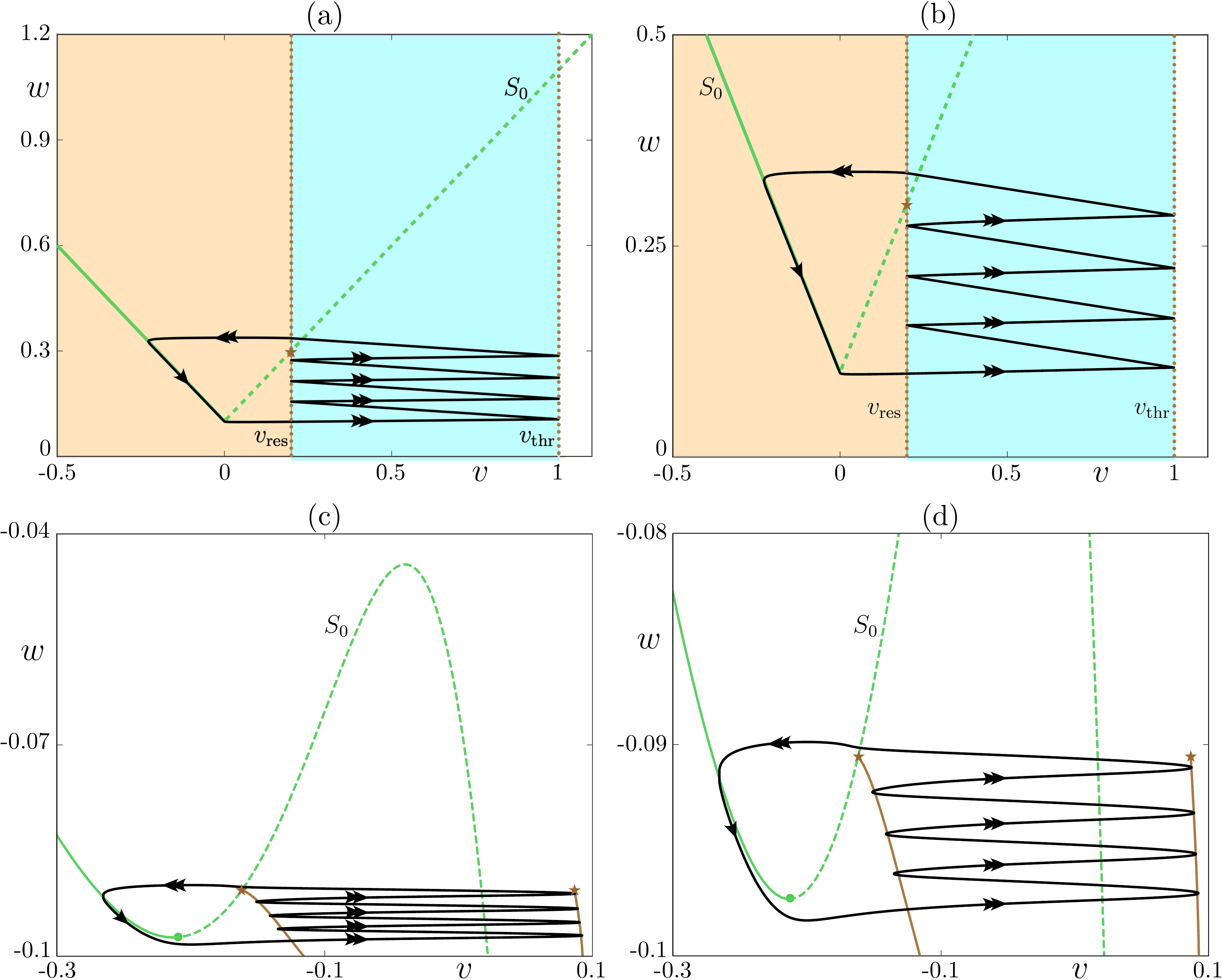}
\caption{Comparison between the nonsmooth systems with resets that we study (panels (a)-(b)), and smooth square-wave bursters like the extended Morris-Lecar model~\cite{terman91} (panels (c)-(d)); right panels are zoomed views of left ones.}
\label{fig:comp}
\end{figure}
The dynamics of system~\eqref{eq:modelgen}--\eqref{eq:reset} with $F(v,w) = -w$ is akin to square-wave bursting, a phenomenon which can be captured within the framework of smooth slow-fast ODEs, with two fast and one slow variables~\cite{rinzel86}. The square-wave bursting scenario requires that the fast subsystem, when the original slow variable is considered as a parameter, is bistable between equilibria and limit cycles. It also requires that the bistability zone of the fast subsystem is bounded by two bifurcations, namely a saddle-node bifurcation of equilibria and a saddle homoclinic bifurcation so as to form a \textit{hysteretic cycle}. This notion refers to the existence of two families of attractors of the fast subsystem, each of them being bounded by a bifurcation which destabilises the associated branch. In the full system, once a suitable slow dynamics has been put on the main parameter of the fast subsystem, a hysteretic cycle allows for an alternation between a slow passage near the first branch of attractors of the fast subsystem up to the first bifurcation (in the present case, a nonsmooth fold bifurcation) and a slow passage near the second branch of attractors up to the second bifurcation (in the present case, a nonsmooth saddle homoclinic bifurcation). This alternation between two phases of slow passages, often referred to as the \textit{quiescent phase} and the \textit{burst phase}, is essential for bursting dynamics to arise, in particular square-wave bursting, like in the system we consider here. 

In the present hybrid model, we do have bistability between equilibria and limit cycles in the fast subsystem, and a hysteretic cycle. Therefore system~\eqref{eq:modelgen}--\eqref{eq:reset} can be seen as a nonsmooth hybrid square-wave burster. Note that the system~\eqref{eq:modelgen} has one fast and one slow variable, and the reset condition corresponds to a second fast (in fact, infinitely fast since the reset is instantaneous) variable. Moreover, in the simplest square-wave bursting framework, e.g. the Hindmarsh-Rose model, which is a polynomial square-wave burster, the bursting is obtained by putting a suitable slow dynamics on the parameter displaying bistability and a hysteresis cycle. This slow variable must oscillate along the hysteresis cycle of the fast subsystem, which requires a feedback term from one of the fast variables. In contrast, here, the reset instantaneously decreases the value of $v$ and increments that of $w$. In this way, due to the reset, the dynamics of $w$ is decreasing in the left half-plane $\{v<0\}$ and it is increasing each time $v$ hits the threshold value. This can be obtained in a 3D smooth slow-fast square-wave burster provided the $w$-dynamics (slow) has a term in $v$, and then the resulting dynamics would be entirely comparable to the dynamics of system~\eqref{eq:modelgen}--\eqref{eq:reset}.

Our two models with $F(v,w) = -w$ and $F(v,w) = v-b$ together with the reset rules~\eqref{eq:reset} effectively behave as nonsmooth square-wave bursting systems, as shown in Figure~\ref{fig:comp} where we compare them with a smooth square-wave burster, namely the extended Morris-Lecar model~\cite{terman91}. The main points of comparison are as follows. First, the critical manifold $S_0$ is $V$-shaped in the nonsmooth models and $S$-shaped in the smooth one, with a lower fold point that organises the transition from the quiescent phase of the bursting solution to its burst phase, and which corresponds to a saddle-node bifurcation of the fast subsystem, nonsmooth or smooth depending on the case. Second, the burst phase corresponds to a family of stable limit cycles of the fast subsystem, which in the nonsmooth system are organised via the threshold and reset lines. Finally, a homoclinic bifurcation of the fast subsystem, nonsmooth or smooth, ends the burst. It is marked by a brown star in the figure. The spike-adding transition are comparable in both cases, and mediated by canard solutions, as shown for the nonsmooth model in the present article and, for the smooth model, in e.g.~\cite{desroches2013}. 

There are nevertheless marked differences and we would like to briefly comment on three of them. Firstly, the nonsmooth framework allows for bifurcations, in particular grazing bifurcations, associated with maximal canards, unlike the smooth case where no bifurcations typically accompany this transition. Second, the nonsmooth framework allows for the existence of two different types of maximal canards, associated with two different solution branches of limit cycles with $N$ and $N+1$ resets, respectively. One providing an upper limit (for the L$_2$-norm) to the family of ``headless'' canard cycles with $N$ resets, and the other providing an upper limit to the family of canard cycles with ``head'' with $N+1$ resets (the head of the canard corresponding to the last reset, see Figure~\ref{fig:maxcan}. In the context of smooth slow-fast systems, there is only one type of maximal canards, and one can conjecture that, as the reset parameter $k$ tends to $0$, the two different maximal canards from the nonsmooth system converge towards a joint object that is akin to the smooth maximal canard. Note in passing that in both contexts the maximal canards, indeed, correspond to solutions following the repelling part of the critical manifold for the longest possible segment. However due to the different geometries of the critical manifolds in both cases, there is a geometrical difference between the location of the maximal canard in nonsmooth reset systems and that of the maximal canard in the smooth systems. The maximal canards in nonsmooth systems follows the repelling part of $S_0$ until it meets the threshold line, at which point, by definition, the orbit either has a tangent intersection with the threshold line and turns back towards the switching line (first type of maximal canard, illustrated in Figure~\ref{fig:maxcan} (c)), or a transversal intersection with the threshold line and undergoes a reset (first type of maximal canard, illustrated in Figure~\ref{fig:maxcan} (d)). In contrast, in smooth systems the maximal canard follows the repelling part of the critical manifold until its upper fold point, where the critical manifold becomes attracting again. Finally, there are differences in the bifurcation structure, with a discontinuous adding structure in the nonsmooth case (given that the number of resets must be constant on every branch), whereas the bifurcation structure is continuous (at least for small enough $\eps$) in the smooth case.

\section{Conclusion and perspectives}
\label{sec:conc}

 We have studied a slow-fast piecewise-linear adaptive integrate-and-fire model and finely investigated its bursting regime. In particular, we have analysed canard-induced spike-adding transitions, where canard segments appear upon a one-parameter variation linked with a reset. We have proven the existence of canard cycles and the transition between $N$-reset and $(N+1)$-reset cycles. The question of stability of such cycles along solution branches is quite subtle, in particular in the canard regime, and will deserve further attention in a follow-up study which will be entirely dedicated to this question.
 
  The question of persistence of these canard structure in higher-dimensions is of course extremely pertinent and we plan to work on the generalisation of the present results to systems with more fast and (more interestingly) more slow variables.
  We have considered two versions of the model, depending on the slow dynamics, and which correspond to two different excitable scenarios. Indeed, system~\eqref{eq:model1} is akin to a type-1 neuron model, even though we only considered it in its periodic regime. However, the reset-induced canard mechanism also exists in the excitable regime of~\eqref{eq:modelgen} for $F(v,w) = b-w$ (that is when $b>I$), and one could prove the existence of trajectories containing canard segments (related to the excitability threshold in this context) by using similar map-arguments as we have done here. In contrast, system~\eqref{eq:modelgen} with $F(v,w) = v-b$  is akin to a type-2 neuron model, even though here as well we have studied it only in its periodic regime. The excitable regime in both versions of the model will be an interesting topic for future work.
  
 We have performed accurate numerical analysis of the model, with both direct simulations and numerical continuation which allowed us to reliably compute families of $N$-reset limit cycles up to the canard regime. In parameter space, the spike-adding transitions are discontinuous but branches of $N$-reset and $(N+1)$-reset cycle are exponentially close to each other due to the canard regimes that bound them. In order to apply standard numerical continuation (with the software package \textsc{auto}) to this nonsmooth problem with reset, we have recasted the 2D numerical problem with periodic boundary conditions as a $2(N+2)$D equivalent two-point boundary value problem which, with suitable boundary conditions as well as stopping conditions (when the numerical solution does not correspond anymore to a solution to the original system), allowed us to compute branches of limit cycles of this system. Future work will involve improving this numerical continuation approach, in particular by implementing test functions to detect bifurcations in this setup as well as automatic stopping conditions and branch switching.
 
 Although the models we have studied are phenomenological, there exists more biophysically plausible versions of such AIF models, in particular conductance-based versions of them. In a recent work~\cite{gorski2020}, a conductance-based AIF model was extensively studied as a single unit and also within networks. Most of the dynamics uncovered in this biophysical version can be obtained with the models we studied in the present work; in particular the \textit{delayed bursting} showcased in~\cite{gorski2020} clearly correspond to the bursting cycles with canard segments that we have investigated. Both our theoretical and computational results can be adapted to the biophysical extensions, which is an interesting point of focus for future work.
 
 The link with smooth bursting models is a natural questions and we have presented elements of comparison with smooth square-wave bursting systems, since this is the type of bursting one obtains with systems~\eqref{eq:modelgen}--\eqref{eq:model2} with linear $F(v,w)$. Under parameter variation, we highlihted a number of similarities with smooth square-wave bursters in the role of canard solutions in organizing the transition to bursting as well as shaping the burst itself. To this extent, given that the subthreshold model is a PWL slow-fast system with a $V$-shaped critical manifold, there is no fold-initiated canard in this model, and the canard cycles that we computed and analysed in the present work are akin to \textit{jump-on} canards already present in many smooth slow-fast systems.\newline We also noticed important discrepancies with the smooth case, namely: the geometry of the critical manifold affecting that of the maximal canards; the presence of two types of maximal canards in the nonsmooth framework, as well as the discontinuous bifurcation structure of the spike-adding regime. An interesting future question in this direction is to obtain a deeper understanding of maximal canards in both contexts. More generally, we plan to study PWL slow-fast systems with reset as a framework for constructing and analysing neuro-inspired systems displaying complex oscillations (bursting, mixed-mode oscillations, etc.) in a minimal setup amenable to precise analysis and precise computations while retaining all salient features of their smooth counterparts. In our opinion, such systems offer the advantage of the simplified singular perturbation framework of PWL slow-fast systems~\cite{desroches2016b,desroches2018,desroches2016a} with less linearity zones due to the reset. What is more, they are relevant as models of electronic circuits and artificial neurons (both PWL systems and systems with resets) and hence have the potential to provide a very good mathematical template to study neuronal and neuromorphic systems.

\section*{Acknowledgements}
SR was supported by Ikerbasque (The Basque Foundation for Science), by the Basque Government through the BERC 2018-2021 program and by the Ministry of Science, Innovation and Universities: BCAM Severo Ochoa accreditation SEV-2017- 0718 and through project RTI2018-093860B-C21 funded by (AEI/FEDER, UE) with acronym ``MathNEURO''.

\end{document}